\theoremstyle{plain}
\newtheorem{theorem}{Theorem}[section]
\newtheorem{definition}[theorem]{Definition}
\newtheorem{lemma}[theorem]{Lemma}
\newtheorem{proposition}[theorem]{Proposition}
\newtheorem{corollary}[theorem]{Corollary}
\newtheorem{remark}[theorem]{Remark}
\newtheorem{example}[theorem]{Example}
\newtheorem{question}[theorem]{QUESTION}
\newtheorem{remark-question}[section]{Remark-Question}
\newtheorem{conjecture}[section]{Conjecture}
\newcommand\R{{\mathbb R}}
\newcommand\trace{{\rm tr}}
\newcommand\SU{{\rm SU}}
\newcommand\fre{{\mathfrak e}}
\newcommand\frg{{\mathfrak g}}
\newcommand\frh{{\mathfrak h}}
\newcommand\frn{{\mathfrak n}}
\newcommand\frr{{\mathfrak r}}
\begin{document}
\title[]{Symplectic Half-Flat solvmanifolds}

\subjclass[2000]{Primary 53C15;
Secondary 22E25, 53C80, 17B30, 53C38\\
\textit{Key words}: symplectic half-flat structures,
solvable Lie algebras, supersymmetric equations of type IIA}

\author{M. Fern\'andez}
\address[M. Fern\'andez, V. Manero]
{Universidad del Pa\'{\i}s Vasco\\
Facultad de Ciencia y Tecnolog\'{\i}a, Departamento de Matem\'aticas\\
Apartado 644, 48080 Bilbao\\
Spain}
\email{marisa.fernandez@ehu.es}
\email{victormanuel.manero@ehu.es}

\author{V. Manero}

\author{A. Otal}
\address[A. Otal, L. Ugarte]
{Departamento de Matem\'aticas\,-\,I.U.M.A.\\
Universidad de Zaragoza\\
Campus Plaza San Francisco\\
50009 Zaragoza, Spain}
\email{aotal@unizar.es}
\email{ugarte@unizar.es}

\author{L. Ugarte}


\maketitle

\begin{abstract}
We classify solvable Lie groups admitting left invariant
symplectic half-flat structure. When the Lie group has a compact quotient
by a lattice, we show that these structures
provide solutions of supersymmetric equations of
type IIA.
\end{abstract}


\section{Introduction}
An SU(3) structure on a 6-dimensional manifold defines a nondegenerate 2-form~$F$, an almost-complex structure $J$ and a complex volume form $\Psi$.
The SU(3) structure is \emph{half-flat} if the 4-form $\sigma=F \wedge F$ and the 3-form $\rho=\Psi_+$ given by the real part of the complex volume form are both closed differential forms. Half-flat SU(3) structures are of interest in both differential geometry and physics, since they give rise to $G_2$ holonomy metrics in dimension seven
by solving a certain system of evolution equations \cite{H2,VTSS}.

Nilpotent Lie algebras with half-flat structures have
been classified by Conti~\cite{C}.
Schulte-Hengesbach has classified in \cite{S} direct sums of two 3-dimensional Lie algebras admitting half-flat SU(3) structure,
and the complete classification of decomposable half-flat Lie algebras is achieved by Freibert and Schulte-Hengesbach
in \cite{FS1}. Moreover, in the recent paper \cite{FS2} they classify arbitrary indecomposable Lie algebras
admitting a half-flat SU(3) structure, except for the solvable case with 4-dimensional nilradical.
We use here some of their results as it is explained below.

In the present paper we consider the case when $F$ is itself closed, i.e. $(F,\Psi)$ is a \emph{symplectic half-flat} structure.
It is known that a symplectic half-flat structure $(F,\Psi)$
on a $6$-manifold $M$ defines, on the $7$-manifold $M\times{\mathbb R}$,
the $3$-form $\varphi=F\wedge dt+\Psi_+$
which is a calibrated $G_2$ form in the sense
of Harvey and Lawson \cite{HL}.
Moreover, as
we recall below, in the compact case this kind of structures
are closely related to solutions of the supersymmetric equations of type IIA.
Nilpotent Lie algebras having symplectic half-flat structure are classified by
Conti and Tomassini in~\cite{CT}.
In this paper, we classify the (non-nilpotent) solvable Lie algebras admitting symplectic half-flat structure and,
as an application, solutions of such equations are given.

\smallskip

For unimodular solvable Lie algebras, the classification
is the following:

\begin{theorem}\label{clasif}
An unimodular (non-Abelian) solvable Lie algebra $\frg$ has a symplectic half-flat structure if and only if it is isomorphic to one in the following list:
$$
\begin{array}{rcl}
&&
\fre(1,1)\oplus\fre(1,1) = (0,-e^{13},-e^{12},0,-e^{46},-e^{45}) ;\\[5pt]
&&
\frg_{5,1}\!\oplus\R = (0,0,0,0,e^{12},e^{13}) ;\\[5pt]
&&
A_{5,7}^{-1,-1,1}\!\oplus\R = (e^{15},-e^{25},-e^{35},e^{45},0,0) ;\\[5pt]
&&
A_{5,17}^{\alpha,-\alpha,1}\!\oplus\R= (\alpha e^{15}\!\!+\!e^{25},-e^{15}\!\!+\!\alpha e^{25},-\alpha e^{35}\!\!+\!e^{45},
-e^{35}\!\!-\!\alpha e^{45},0,0),
\, \alpha\geq 0;
\end{array}
$$
$$
\begin{array}{rcl}
&&
\frg_{6,N3} = (0,0,0,e^{12},e^{13},e^{23}) ;\\[3pt]
&&
\frg_{6,38}^{0} = (e^{23},-e^{36},e^{26},e^{26}-e^{56},e^{36}+e^{46},0) ;\\[4pt]
&&
\frg_{6,54}^{0,-1} = (e^{16}+e^{35},-e^{26}+e^{45},e^{36},-e^{46},0,0) ;\\[4pt]
&&
\frg_{6,118}^{0,-1,-1} = (-e^{16}+e^{25},-e^{15}-e^{26},e^{36}-e^{45},e^{35}+e^{46},0,0).
\end{array}
$$
\end{theorem}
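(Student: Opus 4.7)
The plan is to proceed in two directions. For the ``if'' direction I would, for each of the eight Lie algebras in the list, exhibit an explicit symplectic half-flat structure by writing down a pair $(F,\Psi)$ with $F$ a closed non-degenerate $2$-form, $\Psi$ a complex volume form of type $(3,0)$ with respect to the almost complex structure determined by $F$ and the normalization $\Psi\wedge\bar\Psi=\frac{4i}{3}F^3$, and $d\Psi_+=0$. This is a direct (if tedious) verification on each of the eight Lie algebras, once a good choice of basis of left-invariant $1$-forms is made; for instance, on $\fre(1,1)\oplus\fre(1,1)$ the obvious product structure is a natural candidate, and for the indecomposable algebras one can use the semisimple generator $e^5$ or $e^6$ to guide the ansatz.

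The ``only if'' direction is the substantive part. I would begin by reducing to the already known classifications of solvable Lie algebras carrying a (not necessarily symplectic) half-flat SU(3) structure. The decomposable ones are listed by Schulte-Hengesbach \cite{S} and Freibert--Schulte-Hengesbach \cite{FS1}, and the indecomposable non-nilpotent ones are covered by \cite{FS2} \emph{except} for the solvable case with $4$-dimensional nilradical. After imposing unimodularity and discarding the nilpotent algebras (which are already treated in Conti--Tomassini \cite{CT}), one is left with a finite, explicit pool of candidate Lie algebras to examine.

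For each such candidate, I would compute the space $Z^2(\frg)$ of closed invariant $2$-forms, parametrize the non-degenerate elements up to the action of the group of automorphisms preserving the structure constants, and for each equivalence class decide whether there exists a closed $3$-form $\rho=\Psi_+$ stably of the correct type, compatible with $F$, inducing an almost complex structure with $F(\cdot,J\cdot)$ positive definite. Hitchin's recipe (from $\rho$ one reconstructs $J$ and hence $\Psi_-$) converts the compatibility $F\wedge\rho=0$ and positivity conditions into a concrete system on the coefficients. In most non-listed cases the obstruction is already visible at the level of $Z^2\cap\{\text{non-deg}\}$: either no closed $2$-form is non-degenerate, or $F\wedge\rho=0$ together with $d\rho=0$ forces $\rho$ to lie in a locus where the induced $J$ fails to be an almost complex structure or fails the positivity condition.

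The main obstacle is the gap in \cite{FS2}, namely the indecomposable solvable Lie algebras with $4$-dimensional nilradical: for these I cannot quote a half-flat classification and must run the full analysis above without a pre-existing shortlist. The strategy here is to invoke the standard classification of $6$-dimensional unimodular solvable Lie algebras (Mubarakzyanov's lists) restricted to the $4$-dimensional nilradical case, and for each algebra carry out the same $Z^2$-based argument to either exhibit a symplectic half-flat structure (producing $\frg_{6,54}^{0,-1}$ and $\frg_{6,118}^{0,-1,-1}$) or rule one out. A secondary technical point is showing that the list is non-redundant, i.e.\ that the eight algebras are pairwise non-isomorphic; this follows from invariants of their Mubarakzyanov names, but it is worth recording explicitly.
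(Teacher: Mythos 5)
Your overall two-step scheme (explicit examples for the listed algebras; reduction to known classifications plus case-by-case obstructions for the rest) is sound, and for the decomposable algebras it coincides with the paper's: start from the half-flat lists of \cite{S} and \cite{FS1}, keep only the algebras that also carry a symplectic form, and eliminate the survivors one by one. The genuine divergence is in the indecomposable case. You propose to filter first by the half-flat classification of \cite{FS2} and then impose closedness of $F$, which forces you to confront head-on the case that \cite{FS2} leaves open (solvable algebras with $4$-dimensional nilradical) by a brute-force sweep of Mubarakzyanov's and Turkowski's lists. The paper reverses the order of the two filters: by \cite[Theorem~2]{M} the indecomposable unimodular non-nilpotent solvable algebras admitting a \emph{symplectic} form are already completely classified (Table~1), and that list is short enough that one only has to decide, for each of its fourteen entries, whether some closed $\rho$ compatible with some symplectic $F$ induces a positive-definite metric. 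This choice of starting point is exactly what makes the gap in \cite{FS2} irrelevant for the unimodular theorem; the gap only has to be confronted in the non-unimodular Theorem~\ref{clasif2}, where Turkowski's list is used. A second practical difference: rather than normalizing $F$ under the automorphism group as you suggest, which is expensive, the paper packages the non-existence arguments into two automorphism-free obstructions (Proposition~\ref{criterion}, which detects degeneracy or indefiniteness of the metric induced by an arbitrary pair in $\mathcal{S}(\frg)\times Z^3(\frg)$ with $F\wedge\rho=0$, and Proposition~\ref{criterion2}, the calibrated-$G_2$ obstruction on $\frg\oplus\R$ from \cite{CF}). Finally, a small factual slip in your plan: $\frg_{6,54}^{0,-1}$ has a $5$-dimensional nilradical, so it is not produced by your $4$-dimensional-nilradical sweep; the unimodular indecomposable algebras with $4$-dimensional nilradical that your scheme must handle by hand include $\frg_{6,118}^{0,-1,-1}$ (which does carry the structure) and $\frn_{6,84}^{\pm1}$ (which must be excluded). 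None of this is a gap --- your route would work --- but the paper's ordering of the filters is what keeps the computation manageable.
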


It is worth noting that the corresponding solvable Lie groups admit a co-compact discrete subgroup
(see Remarks~\ref{lattice1} and~\ref{lattice2} for details).

In the description of the Lie algebras, we are using the structure equations
with respect to a basis $e^1,\ldots,e^6$ of the dual $\frg^*$. For instance,
$\fre(1,1)\oplus\fre(1,1) = (0,-e^{13},-e^{12},0,-e^{46},-e^{45})$ means that
there is a basis
$\{e^j\}_{j=1}^6$ satisfying
$d e^1=0$, $d e^2=-e^{1}\wedge e^{3}$, $d e^3=-e^{1}\wedge e^{2}$,
$d e^4=0$, $d e^5=-e^{4}\wedge e^{6}$ and $d e^6=-e^{4}\wedge e^{5}$; equivalently, the Lie bracket is
given in terms of its dual basis $\{e_j\}_{j=1}^6$ by
$[e_1,e_2]=e_3$, $[e_1,e_3]=e_2$, $[e_4,e_5]=e_6$ and $[e_4,e_6]=e_5$.
For listing the Lie algebras we use the same labels as in the lists given in
\cite{B,Sha,Tu}.

In Theorem~\ref{clasif} the Lie algebras
$\frg_{5,1}\!\oplus\R$ and $\frg_{6,N3}$ are the only (non-Abelian)
nilpotent Lie algebras having symplectic half-flat structure~\cite{CT}.
The Lie algebras
$\frg_{6,N3}$, $\frg_{6,38}^{0}$, $\frg_{6,54}^{0,-1}$ and $\frg_{6,118}^{0,-1,-1}$ in Theorem~\ref{clasif}
are indecomposable, whereas the first three Lie algebras and
the family $A_{5,17}^{\alpha,-\alpha,1}\!\oplus\R$
are decomposable.
The decomposable case $4\oplus 2$ is of special interest, because such Lie algebras (solvable or not)
cannot admit symplectic half-flat structure (see Proposition~\ref{section4+2} for details).

\smallskip

Regarding non-unimodular solvable Lie algebras, we have:

\begin{theorem}\label{clasif2}
A non-unimodular solvable Lie algebra $\mathfrak{g}$ has a symplectic half-flat structure if and only if it is isomorphic to one in the following list:
$$
\begin{array}{rcl}
&&
A_{6,13}^{-\frac{2}{3},\frac{1}{3},-1}=(-\frac{1}{3}e^{16}+e^{23}, -\frac{2}{3}e^{26}, \frac{1}{3}e^{36}, e^{46}, -e^{56}, 0);\\[4pt]
&&
A_{6,54}^{2,1}=(e^{16}+e^{35}, e^{26}+e^{45}, -e^{36}, -e^{46}, 2e^{56}, 0);\\[4pt]
&&
A_{6,70}^{\alpha,\frac{1}{2}\alpha}=(\frac{\alpha}{2}e^{16}-e^{26}+e^{35}, e^{16}+\frac{\alpha}{2}e^{26}+e^{45},
-\frac{\alpha}{2}e^{36}-e^{46}, e^{36}-\frac{\alpha}{2}e^{46}, \alpha e^{56}, 0);\\[4pt]
&&
A_{6,71}^{-\frac{3}{2}}=(\frac{3}{2}e^{16}+e^{25}, \frac{1}{2}e^{26}+e^{35}, -\frac{1}{2}e^{36}+e^{45}, -\frac{3}{2}e^{46}, e^{56}, 0);\\[4pt]
&&
N_{6,13}^{0,- 2,0, 2}=(- 2e^{16}, + 2e^{26}, e^{36}-e^{45}, e^{35}+e^{46}, 0, 0).
\end{array}
$$
Therefore, $\mathfrak{g}$ is indecomposable.
\end{theorem}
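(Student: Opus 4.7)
The plan is to proceed by exhaustive inspection of the classification of 6-dimensional non-unimodular solvable Lie algebras, checking in each case whether a symplectic half-flat SU(3) structure exists. Since the indecomposable Lie algebras carrying a half-flat SU(3) structure are already classified in \cite{FS2} (except for the solvable case with 4-dimensional nilradical), the work can largely be organized by restricting to the list of \cite{FS2} and then checking which of those algebras admit the stronger symplectic half-flat structure.

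First I would separate the decomposable and the indecomposable cases. Proposition~\ref{section4+2} handles the $4\oplus 2$ situation; a similar K\"unneth-type argument combined with the half-flat classifications in \cite{S,FS1} should rule out the remaining decomposable non-unimodular types $3\oplus 3$ and $5\oplus 1$. For the indecomposable case I would sort the candidates by the dimension of the nilradical of $\frg$, which must be $3$, $4$, or $5$ (a $6$-dimensional nilradical is excluded by non-unimodularity). For nilradicals of dimension $3$ or $5$ the list in~\cite{FS2} provides a finite set of candidates; for each I would compute the spaces $Z^2(\frg)$ of closed $2$-forms and $Z^3(\frg)$ of closed $3$-forms in the structural basis $\{e^1,\ldots,e^6\}$, and check whether there exist $F\in Z^2(\frg)$ of maximal rank and $\rho\in Z^3(\frg)$ defining together an SU(3) structure. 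The case of $4$-dimensional nilradical, not covered by~\cite{FS2}, requires an independent analysis: starting from the classification of $4$-dimensional Lie algebras that can appear as nilradicals, I would enumerate the possible extensions by two derivations and carry out the same algebraic check.

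The positive part of the statement amounts to exhibiting, for each of the five listed Lie algebras, an explicit pair $(F,\rho)$ and verifying directly the SU(3) conditions: nondegeneracy of $F$, the compatibility $F\wedge\rho=0$, Hitchin stability $\lambda(\rho)<0$, the normalization relating $\rho$ and its partner $3$-form to $F^3$, and positivity of the induced metric. For the remaining Lie algebras the non-existence is proved by a structural obstruction in each case: either $Z^2(\frg)$ contains no element of top rank, or every closed $3$-form compatible with a chosen symplectic $F$ fails stability, or the associated symmetric bilinear form cannot be made positive definite.

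The main obstacle will be the case of $4$-dimensional nilradical, which is not covered by~\cite{FS2} and so demands an independent enumeration of candidate algebras. To keep this tractable I would use the closedness condition $dF=0$ as a source of linear equations on the structure constants of the extension; these often eliminate entire subfamilies before any Hitchin-type analysis is needed, leaving only a short list of algebras on which the full symplectic half-flat system must be solved.
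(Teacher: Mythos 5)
Your proposal follows essentially the same route as the paper: restrict to the half-flat classifications of \cite{S}, \cite{FS1}, \cite{FS2} where they apply, filter those lists by the existence of a symplectic form, exhibit an explicit compatible pair $(F,\rho)$ for each of the five surviving algebras, and eliminate the rest by obstructions (degeneracy or non-positivity of the induced metric, as in Proposition~\ref{criterion}). The only practical divergence is the $4$-dimensional-nilradical case: the paper invokes Turkowski's published classification \cite{Tu} instead of re-enumerating extensions of the $4$-dimensional nilpotent algebras by two derivations as you propose (a re-derivation that would work but reproduces a substantial existing classification), and it streamlines most of the negative cases with the calibrated-$G_2$ obstruction of Proposition~\ref{criterion2} rather than case-by-case metric computations.
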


In the proof of Theorem~\ref{clasif} we use the classification given in \cite{M} of 6-dimensional unimodular solvable Lie algebras
admitting a symplectic form. That list is given using as starting point the original classification due to Mubarakzyanov \cite{Mubara}.
To prove Theorem~\ref{clasif2} we use the classification of Turkowski \cite{Tu} (or
Freibert's and Schulte-Hengesbach's refinement \cite{FS2} of
Mubarakzyanov \cite{Mubara} classification) of solvable Lie algebras with four dimensional (or five dimensional, respectively) nilradical,
and firstly we show which of those Lie algebras have a symplectic form
(see Proposition~\ref{5-nilradical} and Proposition~\ref{4-nilradical}).
Moreover, for each Lie algebra given in the theorems above
we show an explicit symplectic half-flat structure $(F,\Psi)$.
The last assertion of Theorem~\ref{clasif}, i.e. that the Lie group associated to
each Lie algebra
admits a co-compact discrete subgroup, follows from \cite{B,FLS,Gor,T,Y}.

The paper is structured as follows. In section~\ref{section-shf} we review general facts about SU(3) structures
and explain the method that we follow for studying the existence of symplectic half-flat structure on
solvable Lie algebras.
In Section~\ref{sectionindecomp} we investigate the case when the solvable Lie algebra is indecomposable
and, in particular, when the Lie algebra is non-unimodular and with
nilradical of dimension four or five. Here we must notice that according to
\cite{Mubara}, a solvable Lie algebra of
dimension 6 with nilradical
of dimension
lower than $4$ is decomposable or nilpotent.
The decomposable case is considered in Section~\ref{secc-descomp}. As a consequence
of the results proved in Section~\ref{secc-descomp} we conclude that the decomposable
solvable Lie algebras having symplectic half-flat structure are unimodular.

Finally, as an application of Theorem~\ref{clasif}, in Section~\ref{application} we
provide compact solutions to certain supersymmetric equations of type IIA.
Supersymmetric flux vacua with constant intermediate $SU(2)$
structure were introduced by Andriot~\cite{And}, and it is showed in \cite{FU} that
they are closely related to the existence of special classes of half-flat
structures on the internal manifold. In particular, solutions of the SUSY equations of type IIA
possess a symplectic half-flat structure.
In Proposition~\ref{IIA-solutions}
we prove that
any $6$-dimensional compact solvmanifold admitting an invariant symplectic half-flat structure
also admits a solution of the SUSY equations of type IIA. This provides the complete list of compact solvmanifolds
admitting invariant solutions of such supersymmetric equations.

\section{Symplectic half-flat structures} \label{section-shf}

In this section we recall some well-known facts about $\SU(3)$ structures
and consider several obstructions to the existence of symplectic half-flat structures.
We will follow ideas given in \cite{C}, \cite{CF} and \cite{FS1}.

An $\SU(3)$ structure on a 6-dimensional manifold $M$ is an $\SU(3)$ reduction of the frame bundle of $M$. We consider the characterization of $\SU(3)$ structures given in \cite{H1}, i.e. in terms of certain stable forms which satisfy some additional compatibility conditions.

A $3$-form $\rho$ on a six dimensional oriented vector space $(V,\nu)$ is \emph{stable} if its orbit under the action of the group GL($V$) is open.
Let $\kappa:\Lambda^5V^*\longrightarrow V\otimes\Lambda^6V^*$ be the isomorphism with $\kappa(\eta)=X\otimes\nu$ such that $\iota_X\nu=\eta$, and let $K_\rho:V\longrightarrow V\otimes\Lambda^6V^*$ be given by $K_\rho(X)=\kappa(\iota_X\rho\wedge\rho)$.
In terms of the invariant $\lambda(\rho)=\frac{1}{6}\trace(K_\rho^2)$, the stability of $\rho$ is
equivalent to the open condition $\lambda(\rho)\neq 0$. Moreover, $\lambda(\rho)$ enables us to construct a volume form $\phi(\rho):=\sqrt{|\lambda(\rho)|}\in\Lambda^6V^*$.

The endomorphism $J_\rho:=\frac{1}{\phi(\rho)}K_\rho$ gives rise to an almost complex structure if $\lambda(\rho)<0$.
The action of $J_\rho^*$ on 1-forms is given by the formula
\begin{equation}\label{def-J}
J_\rho^*\alpha(X)\phi(\rho)=\alpha\wedge\iota_X\rho\wedge\rho.
\end{equation}
The characterization of $\SU(3)$ structures requires the existence of a $2$-form $F$ which is
\emph{stable}, i.e. $\phi(F):=F^3\neq 0$, such that the pair $(F,\rho)$ is \emph{compatible},
in the sense that
$$
F\wedge\rho=0,
$$
and \emph{normalized}, i.e.
$$
\phi(\rho)=2\phi(F).
$$
Such a pair $(F,\rho)$ induces a pseudo Euclidean metric $g(\cdot,\cdot)=F(J_\rho\cdot,\cdot)$
which satisfies on 1-forms the identity
\begin{equation}\label{eq}
\alpha\wedge J^*_\rho\beta\wedge F^2=\frac{1}{2}g(\alpha,\beta)F^3,\quad\alpha,\beta\in V^*.
\end{equation}

An $\SU(3)$ structure on $V$ is a pair of compatible and normalized stable forms $(F,\rho)\in\Lambda^2V^*\times\Lambda^3V^*$ with $\lambda(\rho)<0$ inducing a positive-definite metric. If in addition $V=\frg$ is a Lie algebra then a \emph{symplectic half-flat structure} on $\frg$ is an $\SU(3)$ structure $(F,\rho)$ such that $dF=0$ and $d\rho=0$, where $d$ denotes the Chevalley-Eilenberg differential of $\frg$.

From now on, we denote by $Z^k(\frg)$ the space of closed $k$-forms on $\frg$, by $\mathcal{S}(\frg)=\{F\in Z^2(\frg)\mid\, F^3\not=0\}$
the space of symplectic forms on $\frg$, and by ${\rm Ann}(\rho)$ the annihilator of $\rho$ in the
exterior algebra $\Lambda^*\frg^*$.
In \cite{FS1} the authors give a useful simple obstruction to the existence of half-flat structures on Lie algebras based on equation (\ref{eq}).
In the presence of a compatible symplectic form, such obstruction reads as:

\begin{proposition}\label{criterion}
Let us fix a volume element $\nu$ on $\frg$ and let $\rho\in Z^3(\frg)$.
Let us define $\tilde{J}_\rho^*$ by imitating \eqref{def-J}
but with respect to the volume element $\nu$, that is,
$$
(\tilde{J}_\rho^*\alpha)(X)\,\nu=\alpha\wedge\iota_X\rho\wedge\rho, \quad X\in\frg,
$$
and let $\tilde{J}_\rho\colon \frg\longrightarrow \frg$ be the
endomorphism of $\frg$
given by $\alpha(\tilde{J}_\rho X)=-(\tilde{J}_\rho^*\alpha)(X)$, for any $X\in\frg$
and $\alpha\in\frg^*$. Then:
\begin{enumerate}
\item[{\rm (i)}] If there is some $\alpha\in \frg^*$ such that
$$
\alpha\wedge \tilde{J}^*_\rho\alpha\wedge F^2=0
$$
for any $F\in \mathcal{S}(\frg)\cap {\rm Ann}(\rho)$, then $\frg$ does not admit any symplectic half-flat structure.
\item[{\rm (ii)}] If there are some $X,Y\in\frg$ such that
$$
F(\tilde{J}_\rho(X),X)\,F(\tilde{J}_\rho(Y),Y)\leq 0
$$
for any $F\in \mathcal{S}(\frg)\cap {\rm Ann}(\rho)$, then $\frg$ does not admit any symplectic half-flat structure.
\end{enumerate}
\end{proposition}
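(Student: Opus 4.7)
My plan is to reduce both obstructions to the compatibility identity~\eqref{eq} combined with positive-definiteness of the metric $g$ induced by a genuine SU(3) structure.

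First, I would calibrate $\tilde J_\rho^*$ against $J_\rho^*$. Since $\phi(\rho)$ and $\nu$ are both nonzero elements of the one-dimensional space $\Lambda^6\frg^*$, there is a nonzero scalar $c$ with $\phi(\rho)=c\,\nu$. Comparing the defining formula of $\tilde J_\rho^*$ in the statement with~\eqref{def-J} immediately gives $\tilde J_\rho^*=c\,J_\rho^*$, and the sign convention for $\tilde J_\rho$ translates this into $\tilde J_\rho=-c\,J_\rho$ on $\frg$.

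Now assume for contradiction that $(F,\rho)$ is a symplectic half-flat structure on $\frg$ with the given 3-form $\rho$. The closed 2-form $F$ is symplectic and the SU(3) compatibility condition gives $F\wedge\rho=0$, placing $F$ in $\mathcal{S}(\frg)\cap\mathrm{Ann}(\rho)$, so the hypotheses of (i) and (ii) specialize to this $F$. For part (i), putting $\alpha=\beta$ in~\eqref{eq} and using the scaling above yields
\[
\alpha\wedge\tilde J_\rho^*\alpha\wedge F^{2}\;=\;\tfrac{c}{2}\,g(\alpha,\alpha)\,F^{3},
\]
and the hypothesis forces the left-hand side to vanish, so $g(\alpha,\alpha)=0$; positive-definiteness of the induced metric on $\frg^*$ then gives $\alpha=0$, contradicting the implicit nonvanishing of the $\alpha$ supplied by the hypothesis. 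For part (ii), the vector form $g(X,Y)=F(J_\rho X,Y)$ together with $\tilde J_\rho=-c\,J_\rho$ gives $F(\tilde J_\rho X,X)=-c\,g(X,X)$, and hence
\[
F(\tilde J_\rho(X),X)\,F(\tilde J_\rho(Y),Y)\;=\;c^{2}\,g(X,X)\,g(Y,Y)\;\ge\;0,
\]
with equality only if $X=0$ or $Y=0$; the hypothesis forces this product to be $\le 0$, so again one of $X,Y$ must vanish, contradicting the implicit assumption that both are nonzero.

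The only delicate point is the initial calibration converting the auxiliary volume element $\nu$ back into the intrinsic $\phi(\rho)$; once the scalar $c$ is inserted correctly, both obstructions follow in one line from~\eqref{eq} and the positivity of $g$.
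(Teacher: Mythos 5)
Your proof is correct and follows essentially the same route as the paper's: the paper's own argument is just the one-line observation that $\tilde J_\rho$ is proportional to $J_\rho$ and that \eqref{eq} together with positive-definiteness of $g$ rules out both (i) and (ii), which is exactly what you carry out, only with the proportionality constant $c$ and the resulting identities $\alpha\wedge\tilde J_\rho^*\alpha\wedge F^2=\tfrac{c}{2}g(\alpha,\alpha)F^3$ and $F(\tilde J_\rho X,X)=-c\,g(X,X)$ made explicit. Your remark that $\alpha$, $X$, $Y$ must implicitly be nonzero is a fair observation about the statement that the paper leaves tacit.
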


\begin{proof}
Notice that $\tilde{J}_\rho$ is proportional to the almost complex structure $J_\rho$. Now, from (\ref{eq}) it follows that
in case (i) the induced metrics $g$ are degenerate, and in case (ii) we get a contradiction with the positive-definiteness
of $g$.
\end{proof}

In \cite{CF} it is proved the following restriction to the existence of a calibrated
$G_2$ form on a Lie algebra $\mathfrak{h}$: if there is a a nonzero vector
$X\in \mathfrak{h}$ such that $(i_X\phi)^3=0$ for all $\phi\in Z^3(\mathfrak{h}^*)$,
then $\mathfrak{h}$ does not admit calibrated $G_2$ structures.
It is known \cite{FernandezGray} that a symplectic half-flat structure on the
Lie algebra $\mathfrak{g}$ induces the calibrated $G_2$ form
$\varphi=F\wedge dt+\Psi_+$ on the Lie algebra
$\mathfrak{h}=\mathfrak{g}\oplus\mathbb{R}$.
These facts imply the following:
\begin{proposition}\label{criterion2}
If there exists $X\in \mathfrak{g}\oplus\mathbb{R}$ such that
$$(i_X\varphi)^3=0 \text{ for all } \varphi\in Z^3((\mathfrak{g}\oplus\mathbb{R})^*),$$
then $\mathfrak{g}$ does not admit symplectic half-flat structure.
\end{proposition}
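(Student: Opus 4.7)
The plan is to prove the proposition by contradiction, directly combining the two facts recalled in the paragraph immediately preceding the statement. Suppose, towards a contradiction, that $\mathfrak{g}$ admits a symplectic half-flat structure $(F,\Psi)$. Since $dF=0$ and $d\Psi_+=0$ by definition of symplectic half-flat, and since the dual generator $dt$ of the $\mathbb{R}$-summand is closed, the 3-form $\varphi=F\wedge dt+\Psi_+$ on $\mathfrak{h}=\mathfrak{g}\oplus\mathbb{R}$ is closed as well; that is, $\varphi\in Z^3(\mathfrak{h}^*)$.

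By the cited result of Fern\'andez and Gray, this same $\varphi$ is a calibrated $G_2$ form on $\mathfrak{h}$, so $\mathfrak{h}$ admits a calibrated $G_2$ structure. On the other hand, the obstruction of \cite{CF} quoted just above says precisely that, whenever a nonzero vector $X\in\mathfrak{h}$ satisfies $(i_X\phi)^3=0$ for every $\phi\in Z^3(\mathfrak{h}^*)$, no calibrated $G_2$ form exists on $\mathfrak{h}$. Specializing this obstruction to the particular $\varphi$ just constructed produces the desired contradiction.

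Thus no genuine calculation is required; the logical content reduces to a one-line composition of the two recalled results, applied to the seven-dimensional algebra $\mathfrak{g}\oplus\mathbb{R}$. The only subtlety worth flagging is that the vector $X$ furnished by the hypothesis must be understood as nonzero (the condition being vacuous for $X=0$), and this is implicit in the way the \cite{CF} criterion is invoked. I do not foresee any serious obstacle; should one wish to make the argument self-contained, the sole nontrivial input is the algebraic fact behind \cite{CF}, namely that for a $G_2$ three-form $\varphi$ and any nonzero $X\in\mathfrak{h}$ the contraction $i_X\varphi$ is a 2-form of maximal rank on the 6-dimensional quotient $\mathfrak{h}/\langle X\rangle$, and therefore $(i_X\varphi)^3\neq 0$.
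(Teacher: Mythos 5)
Your argument is correct and is exactly the one the paper intends: it states the two facts (the calibrated $G_2$ obstruction from \cite{CF} and the Fern\'andez--Gray construction of $\varphi=F\wedge dt+\Psi_+$) immediately before the proposition and leaves their combination implicit, which you have simply written out. Your remark that $X$ must be understood as nonzero is a fair and accurate reading of the hypothesis as quoted from \cite{CF}.
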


In this paper we study the existence of symplectic half-flat structures on 6-dimensional solvable Lie algebras.
In the decomposable cases $3\oplus3$, $4\oplus2$ and $5\oplus1$ our starting point is the half-flat classification
obtained in \cite{S} and \cite{FS1}
and, after finding which of them admit symplectic forms, we apply the above proposition to classify the Lie algebras having symplectic half-flat
structure.
We remark that the results obtained in the decomposable cases are completely general and not constrained to the solvable type.

On the other hand, in the case of indecomposable
unimodular solvable Lie algebras,
we start with the symplectic classification given in \cite{M} and then we search
for a compatible half-flat structure (see Proposition~\ref{unimod-indescomp-list}).
For indecomposable non-unimodular
solvable Lie algebras with nilradical of dimension $4$, we use the
list of \cite{Tu} and we show which of them have a symplectic form (see
Proposition \ref{4-nilradical}).
However, in the case of indecomposable non-unimodular
solvable Lie algebras, with $5$-dimensional nilradical, we begin with the classification given in \cite{FS2}
of indecomposable
solvable Lie algebras having a half-flat structure (see
Proposition \ref{5-nilradical}).
The next step is then to apply
the above
propositions which are obstructions to the existence
of symplectic half-flat
structures.

In the cases of existence of symplectic half-flat structure we will only provide a particular example $(F,\rho)$ written in standard form
$F=f^{12}+f^{34}+f^{56}$ and $\rho=Re((f^{1}+i\,f^{2})(f^{3}+i\,f^{4})(f^{5}+i\,f^{6}))$ with respect to some basis $\{f^1,\ldots,f^6\}$ of~$\frg^*$.
Notice that this basis is orthonormal for the underlying metric and the almost complex structure $J$ is given by
$J^* f^1=-f^2$, $J^* f^3=-f^4$ and $J^* f^5=-f^6$.

\section{The indecomposable case} \label{sectionindecomp}
In this section
we prove Theorem~\ref{clasif} and Theorem~\ref{clasif2} in the case
of indecomposable solvable Lie algebras of dimension six.

In the next proposition we focus on unimodular solvable Lie algebras which are not nilpotent,
because it is proved in~\cite{CT} that $\frg_{6,N3}$ is
the only indecomposable nilpotent Lie algebra admitting symplectic half-flat structure.
For listing the unimodular solvable Lie algebras, we use the notation given in \cite{B}
(see Appendix).

\begin{proposition}\label{unimod-indescomp-list}
The only 6-dimensional indecomposable unimodular non-nilpotent solvable Lie algebras admitting symplectic half-flat structures are
$\frg_{6,38}^{0}$, $\frg_{6,54}^{0,-1}$ and $\frg_{6,118}^{0,-1,-1}$.
\end{proposition}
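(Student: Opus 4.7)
The plan is to proceed case-by-case along the classification of 6-dimensional indecomposable unimodular non-nilpotent solvable Lie algebras admitting a symplectic form, as given in \cite{M}. For each Lie algebra $\frg$ on that list, there are two possible outcomes: either $\frg$ is one of $\frg_{6,38}^{0}$, $\frg_{6,54}^{0,-1}$, $\frg_{6,118}^{0,-1,-1}$, in which case I will exhibit an explicit symplectic half-flat structure $(F,\rho)$ in the standard form $F=f^{12}+f^{34}+f^{56}$, $\rho=\Real((f^1+if^2)\wedge(f^3+if^4)\wedge(f^5+if^6))$ relative to a suitably chosen basis $\{f^j\}$ of $\frg^*$; or $\frg$ admits no symplectic half-flat structure, in which case I will rule it out using the obstructions in Proposition~\ref{criterion} or Proposition~\ref{criterion2}.

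For the three positive cases, the verification is direct once a candidate structure is displayed: I check that $(F,\rho)$ is compatible ($F\wedge\rho=0$) and normalised ($\phi(\rho)=2\phi(F)$), that the induced metric is positive-definite (automatic in the standard form), and that $dF=0$ and $d\rho=0$, the latter two reducing to a finite linear algebra computation using the structure equations recorded in Theorem~\ref{clasif}. Explicit bases $\{f^j\}$ can be obtained by adapting an invariant orthonormal frame to the symplectic form and then verifying closure of $\rho$ by direct expansion.

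For the negative cases, the general scheme is: first compute the spaces $Z^2(\frg)$, $Z^3(\frg)$ and $\mathcal{S}(\frg)$ from the structure equations; then for each closed 3-form $\rho\in Z^3(\frg)$, determine the family $\mathcal{S}(\frg)\cap \mathrm{Ann}(\rho)$ of compatible symplectic forms by imposing $F\wedge\rho=0$ as a linear system on the coefficients of $F$; finally, with a fixed volume element $\nu$, evaluate $\tilde J_\rho$ and test either criterion of Proposition~\ref{criterion}. Specifically, one looks for a 1-form $\alpha$ (or vectors $X,Y$) such that $\alpha\wedge \tilde J_\rho^*\alpha\wedge F^2$ vanishes identically on the family (giving a degenerate induced metric), or such that $F(\tilde J_\rho X,X)\,F(\tilde J_\rho Y,Y)\le 0$ holds identically (violating positive-definiteness). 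When neither condition yields an obstruction on its own, the auxiliary Proposition~\ref{criterion2} applied to $\frg\oplus\R$ provides a further filter by examining whether some $X\in\frg\oplus\R$ satisfies $(\iota_X\varphi)^3=0$ for every closed 3-form $\varphi$ on $\frg\oplus\R$.

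The main obstacle is the sheer length of the list from \cite{M} and, within each non-admitting case, the need to parametrise $\mathcal{S}(\frg)\cap\mathrm{Ann}(\rho)$ explicitly so that the obstruction $\alpha$ (or the pair $X,Y$) can be chosen uniformly across the family. In most cases, the natural basis inherited from the structure equations leads to a block structure of $\tilde J_\rho$ in which some coordinate direction is forced into the kernel of the induced bilinear form; identifying that direction is the essential creative step. In a few borderline cases the first criterion is not sharp enough and one must either combine both criteria or invoke Proposition~\ref{criterion2} on the product Lie algebra, which is the most delicate part of the argument.
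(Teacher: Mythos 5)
Your plan coincides with the paper's own proof: it starts from the symplectic classification in \cite{M} (Table~1), exhibits explicit structures for $\frg_{6,38}^{0}$, $\frg_{6,54}^{0,-1}$ and $\frg_{6,118}^{0,-1,-1}$, and eliminates the remaining algebras by finding, for each, a $1$-form $\alpha$ with $\alpha\wedge\tilde J_\rho^*\alpha\wedge F^2=0$ or vectors with $F(\tilde J_\rho X,X)\,F(\tilde J_\rho Y,Y)\le 0$ uniformly over $\mathcal{S}(\frg)\cap{\rm Ann}(\rho)$, exactly as in Proposition~\ref{criterion}. The only minor difference is that the paper handles all negative unimodular indecomposable cases with Proposition~\ref{criterion} alone, reserving Proposition~\ref{criterion2} for the non-unimodular classification.
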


\begin{proof}
By \cite[Theorem 2]{M} the indecomposable unimodular non-nilpotent solvable Lie algebras admitting a symplectic form are
those appearing in Table~1 of the Appendix.

Notice that the Lie algebra $\frg_{6,54}^{0,-1}$ has a symplectic half-flat structure by \cite{TV}.
Moreover, explicit symplectic half-flat structures on $\frg_{6,38}^{0}$ and $\frg_{6,118}^{0,-1,-1}$
are given in Table~1.

Next we show in some detail how Proposition~\ref{criterion} is used for the remaining Lie algebras $\frg$ in the list.
In all cases we consider on $\frg$ the volume element given by $\nu=e^{123456}$,
where $\{e^{1},\ldots,e^6\}$ is the basis of $\frg^*$ in Table~1.

On the Lie algebra $\frg=\frg_{6,3}^{0,-1}$, any pair $(F,\rho)$ with $F\in Z^2(\frg)$ and $\rho\in Z^3(\frg)$
is given by
$$
\begin{array}{l}
F=b_1e^{16}+b_2e^{23}+b_3e^{26}+b_4e^{36}+b_5e^{45}+b_6e^{46}+b_7e^{56},\\[5pt]
\rho=a_1e^{123}+a_2e^{126}+a_3e^{136}+a_4e^{146}+a_5e^{156}+a_6e^{236}+a_7e^{246}\\
\phantom{m}+a_8e^{256}+a_9e^{345}+a_{10}e^{346}+a_{11}e^{356}+a_{12}e^{456}.
\end{array}
$$
The stability of $F$ implies that $b_1,b_2,b_5\neq 0$, so we can take without loss of generality $b_5=1$.
From the compatibility condition $F\wedge\rho=0$ we get $a_1=a_2=a_4=a_5=0$, $a_3=a_9b_1$ and $a_6= a_9b_3-a_{12}b_2$.
Now,
Proposition~\ref{criterion}~(i)
is satisfied for the 1-form $\alpha=e^6$ and consequently $\frg$ does not admit symplectic half-flat structure.

For the Lie algebra $\frg=\frg_{6,13}^{-1,\frac{1}{2},0}$, any pair $(F,\rho)\in Z^2(\frg)\times Z^3(\frg)$
is given by
$$
\begin{array}{l}
F=b_1 e^{13}+b_2(-\frac{1}{2}e^{16}+e^{23})+b_3e^{24}+b_4e^{26}+b_5e^{36}+b_6e^{46}+b_7e^{56},\\[5pt]
\rho=a_1e^{126}+a_2e^{135}+a_3e^{136}+a_4(\frac{1}{2}e^{146}-e^{234})+a_5(\frac{1}{2}e^{156}+e^{235})+a_6e^{236}\\
\phantom{m} +a_7e^{245}+a_8e^{246}+a_9e^{256}+a_{10}e^{346}+a_{11}e^{356}+a_{12}e^{456}.
\end{array}
$$
The form $F$ is stable if and only if $b_1, b_3, b_7\neq 0$, so we can suppose without loss of generality
that $b_1=1$. When imposing $F\wedge\rho=0$ we find that $a_7=-a_2b_3$, $a_8 = -a_3b_3$,
$a_9 = a_5 b_2 + a_2 b_4$, $a_5 = a_2b_2$, $a_{12} = a_2b_6$ and $a_4 = \frac{-a_{11} b_3 - a_2 b_3 b_5}{b_7}$.
A direct calculation shows that
$$
\tilde{J}_\rho e_1=-\frac{b_3 (a_{11} b_2 + b_2 b_5 + (a_2-1) a_3 b_7)}{b_7}e_1+\frac{a_2 b_3 (a_{11} + b_5)}{b_7}e_2+2a_1a_2\,e_4,
$$
which implies that $F(\tilde{J}_\rho e_1,e_1)=0$ and so Proposition~\ref{criterion}~(ii) is
satisfied for $X=Y=e_1$.

On the Lie algebra $\frg=\frg_{6,70}^{0,0}$,
any pair $(F,\rho)\in Z^2(\frg)\times Z^3(\frg)$ is given by
$$
\begin{array}{l}
F=b_1(e^{13}+e^{24})+b_2(e^{16}+e^{45})+b_3(e^{26}-e^{35})+b_4e^{34}+b_5e^{36}+b_6e^{46}+b_7e^{56},\\[5pt]
\rho=a_1e^{125}+a_2(e^{135}+e^{245})+a_3e^{136}+a_4(e^{145}-e^{235})+a_5(e^{146}+e^{236})\\
\phantom{m}+a_6e^{156}+
a_7e^{246}+a_8e^{256}+a_9e^{345}+a_{10}e^{346}+a_{11}e^{356}+a_{12}e^{456}.
\end{array}
$$
The form $F$ is stable if and only if $b_1, b_7\neq 0$, so we consider $b_1=1$. From the condition $F\wedge\rho=0$ we get that $a_2=\frac{a_1b_4}{2}$, $a_3=-a_7$,
$a_8=a_2b_3-a_1b_5+a_4b_2$, $a_6 = a_2 b_2 + a_1 b_6$, $a_{11} = (a_4 + a_5) b_2 + (a_7 - a_9) b_3 + a_8 b_4 + a_2 b_5$ and
$a_{12} = (a_9-a_3) b_2 - (a_4 + a_5) b_3 - a_6 b_4 + a_2 b_6$.
We find that $\tilde{J}_\rho e_1=\alpha_1e_1+\dots+\alpha_4e_4$ and $\tilde{J}_\rho e_2=\beta_1e_1+\dots+\beta_4e_4$
with $\beta_4=-\alpha_3=-2a_1 (a_4 + a_5)$.
This implies that
$$
F(\tilde{J}_\rho e_1,e_1)=F(\alpha_1e_1+\dots+\alpha_4e_4,e_1)=-\alpha_3=-2a_1 (a_4 + a_5)$$
and
$$
F(\tilde{J}_\rho e_2,e_2)=F(\beta_1e_1+\dots+\beta_4e_4,e_2)=-\beta_4=2a_1 (a_4 + a_5).$$
Therefore, Proposition~\ref{criterion}~(ii) is satisfied for $X=e_1$ and $Y=e_2$, and $\frg_{6,70}^{0,0}$
does not admit symplectic half-flat structure.

For the Lie algebras $\frg_{6,10}^{0,0}$, $\frg_{6,18}^{-1,-1}$, $\frg_{6,21}^0$ and
$\frg_{6,36}^{0,0}$ one can prove that Proposition~\ref{criterion}~(i)
is satisfied for the 1-form $\alpha=e^6$ and consequently they do not admit symplectic half-flat structure.

For the Lie algebras $\frg_{6,13}^{\frac{1}{2},-1,0}$ and $\frg_{6,78}$ a similar argument proves that $F(\tilde{J}_\rho e_1,e_1)=0$,
for $\frg_{6,15}^{-1}$ one has that $F(\tilde{J}_\rho e_4,e_4)=0$ and for $\frn_{6,84}^{\pm 1}$ we have $F(\tilde{J}_\rho e_2,e_2)=0$.
Thus, by
Proposition~\ref{criterion}~(ii) these Lie algebras do not admit symplectic half-flat structure.
\end{proof}

\begin{remark}\label{lattice1}
{\rm
The solvable Lie group corresponding to $\frg_{6,38}^{0}$ admits a lattice by \cite[Proposition 8.3.3]{B}.
For $\frg_{6,54}^{0,-1}$, it is shown in \cite{FLS} that the corresponding simply-connected Lie group admits a compact quotient.
Finally, the solvable Lie group corresponding to
$\frg_{6,118}^{0,-1,-1}$ admits a lattice by \cite{Y}.
}
\end{remark}

In the following we study non-unimodular Lie algebras. A
solvable Lie algebra of dimension 6 with nilradical
of dimension lower than $4$ is decomposable or nilpotent \cite{Mubara}.
So, we are left to study Lie algebras with nilradical of dimension $4$ and $5$.

To prove next proposition we use the list of Freibert and Schulte-Hengesbach
of the solvable Lie algebras, with 5-dimensional nilradical,
having a half-flat structure~\cite{FS2}.
There the authors use
the corrected version, due to Shabanskaya \cite{Sha},
of the original classification by Mubarakzyanov \cite{Mubara} of solvable Lie algebras of dimension $6$ with $5$-dimensional nilradical.
That list given in \cite{Sha} contains $52$ Lie algebras and $70$ families depending at least
of one-parameter: $44$ one-parameter families, $22$ two-parameter families, $3$ three-parameter families and $1$ four-parameter family.

\begin{proposition}\label{5-nilradical}
The only 6-dimensional non-unimodular solvable Lie algebras with $5$-dimensional nilradical admitting symplectic half-flat structures are $A_{6,13}^{-\frac{2}{3},\frac{1}{3},-1}$, $A_{6,54}^{2,1}$, $A_{6,70}^{\alpha,\frac{\alpha}{2}}\, (\alpha \neq 0)$ and $A_{6,71}^{-\frac{3}{2}}$.
\end{proposition}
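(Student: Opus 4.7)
The plan is to follow the same strategy as in the proof of Proposition~\ref{unimod-indescomp-list}, but with the starting list provided by Freibert and Schulte-Hengesbach's classification in \cite{FS2} of indecomposable solvable Lie algebras with $5$-dimensional nilradical admitting a half-flat structure. Since every symplectic half-flat structure is in particular half-flat, this drastically reduces the problem: rather than running through the full Shabanskaya list ($52$ Lie algebras plus $70$ parametric families), one only has to test the short sublist on which a half-flat $\SU(3)$ structure is already known to exist.

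For each candidate $\mathfrak{g}$ on that sublist, I would first determine $Z^2(\mathfrak{g})$ and identify the subset $\mathcal{S}(\mathfrak{g})\subset Z^2(\mathfrak{g})$ of symplectic forms by imposing the stability condition $F^3\neq 0$; if $\mathcal{S}(\mathfrak{g})$ is empty then $\mathfrak{g}$ can be discarded at once. Otherwise, I would write a general closed $3$-form $\rho$, impose the compatibility condition $F\wedge\rho=0$ to cut down the free parameters of the pair $(F,\rho)$, and then apply the obstructions in Proposition~\ref{criterion} (and, if needed, Proposition~\ref{criterion2}) in order to rule out the existence of a positive definite induced metric for all admissible values of the parameters. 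This is exactly the workflow used to handle $\frg_{6,3}^{0,-1}$, $\frg_{6,13}^{-1,1/2,0}$, $\frg_{6,70}^{0,0}$, etc.\ in the unimodular case, and it carries over essentially unchanged.

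For the four Lie algebras in the statement, namely $A_{6,13}^{-\frac{2}{3},\frac{1}{3},-1}$, $A_{6,54}^{2,1}$, $A_{6,70}^{\alpha,\frac{\alpha}{2}}$ with $\alpha\neq 0$, and $A_{6,71}^{-\frac{3}{2}}$, I would exhibit an explicit symplectic half-flat structure written in the standard orthonormal form $F=f^{12}+f^{34}+f^{56}$ and $\Psi=\mathrm{Re}((f^1+\sqi f^2)(f^3+\sqi f^4)(f^5+\sqi f^6))$, relative to a suitably chosen basis $\{f^1,\ldots,f^6\}$ of $\mathfrak{g}^*$. This supplies the existence half of the equivalence and simultaneously explains why the exceptional locus $\alpha=0$ must be excluded in the family $A_{6,70}^{\alpha,\frac{\alpha}{2}}$ (at that value either $\mathcal{S}\cap\mathrm{Ann}(\rho)$ degenerates, or the candidate metric fails to be positive definite).

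The main obstacle is the bookkeeping rather than any conceptual difficulty: for each of the Lie algebras eliminated from the Freibert--Schulte-Hengesbach sublist, the symbolic computation of $\tilde{J}_\rho$ from a generic compatible closed pair $(F,\rho)$ is lengthy; one must choose the right test vector $X$ or $1$-form $\alpha$ that activates Proposition~\ref{criterion}~(i) or (ii); and subcases arise whenever the stability condition $F^3\neq 0$ forces specific coefficients of $F$ to be nonzero or when the defining parameters of the family hit exceptional values where the rank of $\tilde{J}_\rho$ drops. Producing a uniform way to detect the correct obstruction vector for each family is the crux of the case analysis.
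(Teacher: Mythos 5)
Your proposal is correct and follows essentially the same route as the paper: start from the Freibert--Schulte-Hengesbach half-flat list in \cite{FS2}, filter by existence of a symplectic form (Table~2), eliminate the remaining candidates via the obstruction propositions, and exhibit explicit structures for the four survivors. The only cosmetic difference is that the paper disposes of every eliminated algebra in this proposition using Proposition~\ref{criterion2} alone (finding a vector $e_i$ with $(i_{e_i}\varphi)^3=0$ for all $\varphi\in Z^3((\frg\oplus\R)^*)$, which avoids parametrizing compatible pairs $(F,\rho)$ altogether), and the exclusion of $\alpha=0$ in $A_{6,70}^{\alpha,\frac{\alpha}{2}}$ is simply because that value is unimodular and hence handled in Proposition~\ref{unimod-indescomp-list}.
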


\begin{proof}
From the list given in \cite{FS2} one can check that the unique non-unimodular solvable Lie algebras
with $5$-dimensional nilradical admitting a symplectic form are those given in Table~2 of the Appendix.
Next we use Proposition~\ref{criterion2} to find which of them admit in addition a symplectic half-flat structure.

Let $\mathfrak{g}=A_{6,39}^{\frac{3}{2},-\frac{3}{2}}$ and consider the 7-dimensional Lie algebra $\mathfrak{h} =\mathfrak{g}\oplus\mathbb{R}$. Let us consider an arbitrary $\varphi \in Z^3(\mathfrak{h^*})$.
Then,
$$
\begin{array}{rl}
\varphi\!\!\!&=a_1e^{146}+a_2e^{156}+a_3(e^{135}+2e^{236})+a_4e^{245}-a_5(e^{145}-e^{246})+a_6e^{256}\\[5pt]
&\phantom{i}+a_7(2e^{157}+e^{267})-a_8(e^{136}-e^{345})+a_9
e^{346}+a_{10}e^{347}+a_{11}e^{356}+a_{12}e^{367}\\[5pt]
&\phantom{i}+e_{13}e^{456}-a_{14}(e^{167}-2e^{457})+a_{15}e^{467}+a_{16}e^{567}.
\end{array}
$$
Put $\nu=i_{e_1}\varphi$. Thus,
$$\nu=a_1e^{46}+a_2e^{56}+a_3e^{35}-a_5e^{45}+2a_7e^{57}-a_8e^{36}-a_{14}e^{67}$$
is a degenerate 2-form and Proposition \ref{criterion2} is fulfilled with the vector $e_1$, so
$\mathfrak{g}$ does not admit symplectic half-flat structures.

Similarly, for $\mathfrak{g}=A_{6,39}^{1,-1}$ one has that Proposition~\ref{criterion2} is satisfied for the vector~$e_2$.

Now, if $\mathfrak{g}$ is the Lie algebra $A_{6,42}^{-1}$ then Proposition~\ref{criterion2} is satisfied for $e_2$.

For the Lie algebras $\mathfrak{g}=A_{6,51}^{\pm 1}$,
in both cases Proposition~\ref{criterion2}
can be checked with the vector $e_3$.

Now, if $\mathfrak{g}$ is the Lie algebra $A_{6,54}^{-1,-2}$ then Proposition~\ref{criterion2} is satisfied for $e_1$.

We study now the family of Lie algebras $A_{6,54}^{\alpha,\alpha-1}$ with $0 < \alpha \leq 2$. If $\alpha=0$ then it is unimodular, and the case $\alpha=2$ will be studied at the end of the proof. On this family Proposition~\ref{criterion2} is satisfied for the vector $e_1$.

For $A_{6,56}^{1}$ one gets that Proposition~\ref{criterion2} is satisfied for the vector $e_1$, hence $A_{6,56}^{1}$ does not admit symplectic half-flat structure.

The Lie algebra $A_{6,65}^{1,2}$
does not admit a symplectic half-flat structure since Proposition~\ref{criterion2} is satisfied for the vector $e_2$.

For the Lie algebra $A_{6,76}^{-3}$ we have Proposition~\ref{criterion2} fulfilled for the vector $e_1$.

On the Lie algebra $A_{6,82}^{2,5,9}$, vector $e_1$ satisfies Proposition~\ref{criterion2}.

The Lie algebra $A_{6,94}^{-3}$ has no symplectic half-flat structure because Proposition~\ref{criterion2} is satisfied for
the vector $e_1$.

On the Lie algebra $A_{6,94}^{-\frac{5}{3}}$, vector $e_1$ satisfies Proposition~\ref{criterion2}.

The Lie algebra $A_{6,94}^{-1}$ has no symplectic half-flat structure because Proposition~\ref{criterion2} is satisfied for
the vector $e_1$.

Finally, explicit symplectic half-flat structures on $A_{6,13}^{-\frac{2}{3},\frac{1}{3},-1}$,
$A_{6,54}^{2,1}$, $A_{6,70}^{\alpha,\frac{\alpha}{2}}\, (\alpha\neq 0)$ and $A_{6,71}^{-\frac{3}{2}}$
are given in Table~2 of the Appendix.
\end{proof}

To complete the proof of Theorem \ref{clasif2} in the
indecomposable case, it remains to study the solvable Lie
algebras with 4-dimensional nilradical. For this, we use the list of \cite{Tu} that contains
$12$ Lie algebras and $31$ families depending at least
of one-parameter. Indeed,  there are $14$ one-parameter families, $10$ two-parameter
families, $4$ three-parameter families and  $3$ four-parameter.

\begin{proposition}\label{4-nilradical}
$N_{6,13}^{0,-2,0,2}$ is the only 6-dimensional non-unimodular solvable Lie algebra with 4-dimensional nilradical admitting a symplectic half-flat structure.
\end{proposition}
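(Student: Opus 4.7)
The plan is to mirror the strategy already used in Propositions \ref{unimod-indescomp-list} and \ref{5-nilradical}: reduce the problem first to those Lie algebras admitting some symplectic form, and then apply the obstructions in Propositions \ref{criterion} and \ref{criterion2} to discard all but one, for which an explicit structure is exhibited.

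The starting point is Turkowski's list \cite{Tu} of the 6-dimensional solvable Lie algebras with 4-dimensional nilradical (12 algebras plus 31 parametric families), of which we retain only the non-unimodular ones. For each such $\mathfrak g$, the first step is to describe the space $Z^2(\mathfrak g)$ of closed 2-forms in terms of the dual basis $\{e^1,\ldots,e^6\}$, write a generic $F\in Z^2(\mathfrak g)$ with arbitrary coefficients, and impose the stability condition $F^3\neq 0$. This gives, family by family, the locus inside $Z^2(\mathfrak g)$ parametrizing symplectic forms (an analogue of Proposition \ref{5-nilradical}, based on the symplectic classification for 5-dimensional nilradical used there, but now carried out by hand since no reference analogous to \cite{FS2} is being cited in this case). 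The outcome of this first stage is a short list of candidate Lie algebras; all others are discarded for lack of a symplectic form.

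For each algebra in the short list, I would next parametrize $Z^3(\mathfrak g)$, write a generic $\rho\in Z^3(\mathfrak g)$, and impose the compatibility condition $F\wedge\rho=0$; this cuts down the coefficients of $\rho$ to a smaller affine space depending on the symplectic parameters of $F$. Having this explicit joint description of $(F,\rho)\in \mathcal S(\mathfrak g)\cap \mathrm{Ann}(\rho)$, I would then attempt, algebra by algebra, to produce either a 1-form $\alpha$ making $\alpha\wedge \tilde J^*_\rho\alpha\wedge F^2$ vanish (Proposition \ref{criterion}(i)), or vectors $X,Y$ making $F(\tilde J_\rho X,X)\,F(\tilde J_\rho Y,Y)\le 0$ (Proposition \ref{criterion}(ii)), or a vector $X\in\mathfrak g\oplus\R$ making $(i_X\varphi)^3=0$ for all $\varphi\in Z^3((\mathfrak g\oplus\R)^*)$ (Proposition \ref{criterion2}). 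In the proofs above, a single well-chosen element from the standard basis consistently does the job, and I expect the same to be true here.

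For the exceptional case $\mathfrak g=N_{6,13}^{0,-2,0,2}$, the proof is completed by displaying an explicit pair $(F,\rho)$ of stable, compatible, normalized, and closed forms inducing a positive-definite metric; following the convention in Section \ref{section-shf}, it is enough to exhibit a basis $\{f^1,\ldots,f^6\}$ of $\mathfrak g^*$ such that $F=f^{12}+f^{34}+f^{56}$ and $\Psi_+=\mathrm{Re}((f^1+if^2)(f^3+if^4)(f^5+if^6))$ are both closed. Such a basis is recorded in Table 3 of the Appendix, and checking closedness reduces to a direct computation from the structure equations of $N_{6,13}^{0,-2,0,2}$.

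The main obstacle is not conceptual but bookkeeping: Turkowski's list is long, and in contrast to Proposition \ref{5-nilradical} there is no external classification of the symplectic algebras in the 4-dimensional nilradical case to appeal to, so the stability analysis of $F\in Z^2(\mathfrak g)$ must be carried out uniformly across all parametric families, keeping track of which ranges of parameters survive. Once the symplectic shortlist is isolated, the application of Propositions \ref{criterion} and \ref{criterion2} is essentially mechanical and parallel to the previous two propositions, and the final verification for $N_{6,13}^{0,-2,0,2}$ is a routine check.
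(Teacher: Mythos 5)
Your proposal follows essentially the same route as the paper: restrict Turkowski's list to the symplectic algebras (the paper records these in Table~3), eliminate all but one via Propositions~\ref{criterion} and~\ref{criterion2}, and exhibit an explicit structure on $N_{6,13}^{0,-2,0,2}$. The only small caveat is your expectation that a standard basis element always does the job: for the families $N_{6,1}^{\alpha,\beta,\cdot,\cdot}$ the paper must use non-basis $1$-forms such as $(2+\sqrt{3})e^5+e^6$ in Proposition~\ref{criterion}~(i), though this fits within the scheme you describe.
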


\begin{proof}
Starting from the list of Turkowski \cite{Tu} of non-unimodular solvable Lie algebras with 4-dimensional nilradical,
we reduce our attention to those admitting a symplectic form, which we list in Table~3 of
the Appendix.

An explicit symplectic half-flat structure $(F,\rho)$ on the Lie algebra $N_{6,13}^{0,-2,0,2}$ is given in Table~3.
For any of the remaining Lie algebras we use
Propositions~\ref{criterion} and~\ref{criterion2} to prove non-existence of symplectic half-flat structures.
Indeed, for the families $N_{6,2}^{-1,\beta,-\beta}, N_{6,7}^{0,\beta,0}\, (\beta\not=0)$ and $N_{6,13}^{\alpha,\beta,-\alpha,-\beta}$, with $\alpha^2+\beta^2\neq 0$ and $\beta\neq \pm2$, as well as for the Lie algebra $N_{6,17}^{0}$
the hypothesis of Proposition \ref{criterion2} is satisfied with $X=e_3$. The family of Lie algebras $N_{6,2}^{0,-1,\gamma}$ satisfies Proposition \ref{criterion2} with the vector $X=e_2$ for all $\gamma$. Almost all of the remaining Lie algebras and parameter families appearing in Table~3 satisfy the hypothesis of Proposition \ref{criterion2} with the vector $X=e_1$.
However, for the Lie algebra $N_{6,28}$ and the families $N_{6,1}^{\alpha,\beta,-\alpha,-\beta}, N_{6,1}^{\alpha,\beta,0,-1}$ and $N_{6,1}^{\alpha,\beta,-1,0}$, where Proposition \ref{criterion2} is not enough, we use
Proposition~\ref{criterion}
to assert that they do not admit symplectic half-flat structure.
For this, we proceed as in the proof of Proposition~\ref{unimod-indescomp-list} for $A_{6,70}^{0,0}$.
We see that the Lie algebra $N_{6,28}$ satisfies
Proposition~\ref{criterion}~(ii) for $X=e_5$ and $Y=e_6$.
For the family $N_{6,1}^{\alpha,\beta,-\alpha,-\beta}$ the obstruction (i) in Proposition~\ref{criterion} is satisfied for
$(2+\sqrt{3})e^5+e^6$.
For $N_{6,1}^{\alpha,\beta,0,-1}$ Proposition~\ref{criterion}~(i) is satisfied for
the 1-form $(1+\sqrt{3})e^5+e^6$ or $(1+\sqrt{3})e^5-e^6$ depending on the sign of $\beta$.
Finally,
for the family $N_{6,1}^{\alpha,\beta,-1,0}$ the obstruction (i) in Proposition~\ref{criterion} is satisfied for the
1-form $(1+\sqrt{3})e^5+2e^6$ or $(1+\sqrt{3})e^5-2e^6$ depending on the sign of $\alpha$,
which completes the proof.
\end{proof}

\section{The decomposable case} \label{secc-descomp}
In this section we consider all 6-dimensional Lie algebras (unimodular and
non-unimodular) of the form $\frg=\frg_1\oplus\frg_2$.
Our starting point is the half-flat classification given in \cite{S} for the 3$\oplus$3 case and in \cite{FS1} for the 4$\oplus$2 and 5$\oplus$1 cases.

As in the previous section, we will suppose that $\frg$ is non-nilpotent, because it is proved in~\cite{CT} that
the only decomposable nilpotent Lie algebras having symplectic half-flat structure are $\frg_{5,1}\!\oplus\R$ and the Abelian Lie algebra.
Therefore, Theorem~\ref{clasif} follows as a consequence of \cite{CT}, Proposition~\ref{unimod-indescomp-list} and
Propositions~\ref{section3+3}, \ref{section4+2} and~\ref{section5+1} below.

In the next result
we consider the 6-dimensional Lie algebras of the form $\frg=\frg_1\oplus\frg_2$ with $\dim\frg_1=\dim \frg_2=3$.

\begin{proposition}\label{section3+3}
The only $3\oplus3$ non-nilpotent Lie algebra which admits a symplectic half-flat structure is $\fre(1,1)\oplus\fre(1,1)$.
\end{proposition}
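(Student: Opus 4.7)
The plan is to combine three ingredients already laid out in the paper: Schulte-Hengesbach's classification \cite{S} of direct sums $\mathfrak{g}_1\oplus\mathfrak{g}_2$ of two three-dimensional Lie algebras admitting a half-flat $\SU(3)$ structure, the obstruction Proposition~\ref{criterion}, and an explicit construction on $\fre(1,1)\oplus\fre(1,1)$. Since the nilpotent case is settled in \cite{CT}, I may restrict attention to those $3\oplus 3$ algebras in which at least one summand is non-nilpotent, i.e.\ one of the $3$-dimensional Lie algebras $\mathfrak{r}_{3,\lambda}$, $\mathfrak{r}'_{3,\gamma}$, $\fre(1,1)$, $\mathfrak{so}(3)$, $\mathfrak{sl}(2,\R)$ (the nilpotent options being $\R^3$ and the Heisenberg algebra $\mathfrak{h}_3$).

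First, I would extract from \cite{S} the finite sublist of non-nilpotent $\mathfrak{g}_1\oplus\mathfrak{g}_2$ that carry a half-flat structure, and then discard those that do not admit any symplectic form. This second filter is straightforward: working in a basis $\{e^1,e^2,e^3\}\cup\{e^4,e^5,e^6\}$ adapted to the splitting, one writes the most general closed $2$-form $F$ and checks whether $F^3\neq 0$ is possible. Many candidates are eliminated at this stage because $Z^2(\mathfrak{g})$ is too small or its cube vanishes identically; this is where the non-unimodularity of most non-nilpotent $3$-dimensional factors kicks in, since $\mathfrak{g}$ symplectic forces $\mathfrak{g}$ unimodular, narrowing the list drastically.

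Second, for each surviving pair other than $\fre(1,1)\oplus\fre(1,1)$, I would write the general compatible pair $(F,\rho)\in Z^2(\mathfrak{g})\times Z^3(\mathfrak{g})$ with $F\wedge\rho=0$ and $F^3\neq 0$, compute the endomorphism $\widetilde{J}_\rho$ associated to the fixed volume $\nu=e^{123456}$, and apply Proposition~\ref{criterion}. The goal is to produce either a single $\alpha\in\mathfrak{g}^*$ such that $\alpha\wedge\widetilde{J}_\rho^*\alpha\wedge F^2$ vanishes for all admissible $F$, or a pair of vectors $X,Y$ with $F(\widetilde{J}_\rho X,X)\,F(\widetilde{J}_\rho Y,Y)\le 0$, exactly as was done for the indecomposable unimodular list in Proposition~\ref{unimod-indescomp-list}. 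Very often the block-diagonal structure of $d$ inherited from the splitting forces coefficients in $\rho$ to vanish after imposing $F\wedge\rho=0$ and closedness, so that candidates like $\alpha=e^1$ or $\alpha=e^4$ do the job.

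Finally, for $\mathfrak{g}=\fre(1,1)\oplus\fre(1,1)$ it remains to exhibit an explicit symplectic half-flat structure in the standard orthonormal form $F=f^{12}+f^{34}+f^{56}$, $\rho=\Real((f^1+\sqi f^2)(f^3+\sqi f^4)(f^5+\sqi f^6))$; one suitable recipe is to take a basis symmetric under the interchange of the two summands, mixing $e^2,e^3$ (resp.\ $e^5,e^6$) so that the non-closed pieces $-e^{13},-e^{12},-e^{46},-e^{45}$ pair up to yield closed $f^{ij}$ in $F$ and closed triple products in $\rho$. The main obstacle I anticipate is the bookkeeping in the second step: ruling out entire one-parameter families uniformly requires a careful choice of $\alpha$ or $(X,Y)$ that is insensitive to the parameter, and one must verify vanishing on every $F\in\mathcal{S}(\mathfrak{g})\cap\mathrm{Ann}(\rho)$ rather than on a single $F$.
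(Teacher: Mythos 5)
Your plan follows the paper's proof essentially step for step: start from the half-flat classification in \cite{S}, keep only those $3\oplus3$ algebras admitting a symplectic form (the paper's Table~4), rule out each survivor other than $\fre(1,1)\oplus\fre(1,1)$ via Proposition~\ref{criterion} with a suitable $\alpha$ or pair $(X,Y)$, and exhibit an explicit structure on $\fre(1,1)\oplus\fre(1,1)$ (the paper quotes \cite{TV} and records one in Table~4). The one substantive slip is your claim that the existence of a symplectic form on $\frg$ forces $\frg$ to be unimodular: this is false (already $\frr_2=(0,-e^{12})$ is symplectic and non-unimodular), and indeed two non-unimodular algebras, $\fre(2)\oplus\frr_2\oplus\R$ and $\fre(1,1)\oplus\frr_2\oplus\R$, survive the symplectic filter and must be eliminated by the obstruction itself --- the paper uses Proposition~\ref{criterion}~(i) with $\alpha=e^4$ for both. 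Since your stated procedure is to compute $Z^2(\frg)$ and test $F^3\neq0$ directly, you would catch these two cases anyway, so the error does not derail the argument, but that heuristic should be dropped rather than relied upon to shorten the case analysis.
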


\begin{proof}
Starting from the classification in \cite{S}, one can check that the $3\oplus3$ non-nilpotent Lie algebras
admitting both half-flat structure and symplectic form are those listed in Table~4 of the Appendix.
(Notice that in particular the only $3\oplus3$ non-unimodular solvable Lie algebras
having both symplectic and half-flat structure are $\fre(2)\oplus\frr_2\oplus\R$ and $\fre(1,1)\oplus\frr_2\oplus\R$.)

A direct calculation shows that the Lie algebra $\fre(2)\oplus\fre(2)$ satisfies
Proposition~\ref{criterion}~(ii) for $X=e_1$ and $Y=e_2$.

For the Lie algebra $\fre(2)\oplus\R^3$ we have that $F(\tilde{J}_\rho e_2,e_2)=0$.
Let $\frg=\fre(1,1)\oplus\R^3$, since $Z^2(\frg)=Z^2(\fre(2)\oplus\R^3)$ and $Z^3(\frg)=Z^3(\fre(2)\oplus\R^3)$ the
same argument as in the previous case is valid, so $F(\tilde{J}_\rho e_2,e_2)=0$.

The Lie algebra $\fre(2)\oplus\fre(1,1)$ satisfies Proposition~\ref{criterion}~(ii) for $X=e_2$ and $Y=e_3$.

For $\fre(2)\oplus\frh$ and $\fre(1,1)\oplus\frh$ one has that $F(\tilde{J}_\rho e_6,e_6)=0$.

The Lie algebras $\fre(2)\oplus\frr_2\oplus\R$ and $\fre(1,1)\oplus\frr_2\oplus\R$ both satisfy the obstruction~(i)
in Proposition~\ref{criterion} for $\alpha=e^4$.

Finally, it is proved in \cite{TV} that the Lie algebra $\fre(1,1)\oplus\fre(1,1)$ has a symplectic half-flat structure
(see Table~4).
\end{proof}

In the next result we consider
a six dimensional Lie algebra $\frg=\frg_1\oplus\frg_2$ with $\rm{dim}\ \frg_1=4$ and $\rm{dim}\ \frg_2=2$.

\begin{proposition}\label{section4+2}
There is no $4\oplus2$ non-nilpotent Lie algebra admitting symplectic half-flat structure.
\end{proposition}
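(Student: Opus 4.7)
The plan is to follow exactly the strategy used in Propositions~\ref{section3+3}, \ref{unimod-indescomp-list}, \ref{5-nilradical} and \ref{4-nilradical}: start from the relevant classification list and then knock out every member using the obstructions of Propositions~\ref{criterion} and~\ref{criterion2}. Concretely, since we are in the decomposable $4\oplus 2$ setting, I would take as input the classification given in \cite{FS1} of decomposable Lie algebras of the form $\frg=\frg_1\oplus\frg_2$ with $\dim\frg_1=4$ and $\dim\frg_2=2$ that admit a half-flat SU(3) structure, and intersect this list with the set of Lie algebras admitting a symplectic form. This reduces the problem to a finite, explicit list of non-nilpotent candidates (to be collected in an appendix table analogous to Tables~1--4), parametrized by the isomorphism class of the $4$-dimensional factor and by whether $\frg_2\cong\R^2$ or $\frg_2\cong\frr_2$.

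For each candidate $\frg$ on that list I would compute the generic pair $(F,\rho)\in Z^2(\frg)\times Z^3(\frg)$ with respect to the standard dual basis $\{e^1,\ldots,e^6\}$, and then impose: stability of $F$ (i.e.\ $F^3\neq 0$), the compatibility relation $F\wedge\rho=0$, and whatever further normalizations the structure equations force. With these relations in hand, I would evaluate the endomorphism $\tilde{J}_\rho$ of Proposition~\ref{criterion} on a small set of distinguished vectors (typically basis vectors spanning the summand $\frg_2$ or the nilradical of $\frg_1$) and try first the obstruction (ii), namely $F(\tilde{J}_\rho X,X)\cdot F(\tilde{J}_\rho Y,Y)\leq 0$, and then, in the harder cases, the degeneracy obstruction (i) with a carefully chosen $\alpha\in\frg^*$. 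In any residual case where Proposition~\ref{criterion} does not close, I would pass to the associated calibrated $G_2$ form $\varphi=F\wedge dt+\Psi_+$ on $\frg\oplus\R$ and apply Proposition~\ref{criterion2}, exhibiting a vector $X$ with $(\iota_X\varphi)^3=0$ for every $\varphi\in Z^3((\frg\oplus\R)^*)$.

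The main obstacle I expect is uniformizing the choice of obstruction across the parameter families appearing in \cite{FS1}. In the $3\oplus 3$ case of Proposition~\ref{section3+3} the short list allowed ad hoc basis choices, but in the $4\oplus 2$ case the cohomology of the $4$-dimensional factor tends to be larger and closed forms have more free coefficients, so the compatibility condition $F\wedge\rho=0$ alone will not kill enough variables to make the naive choice of $\alpha=e^i$ work. The key heuristic will be to exploit the $\frg_2$ factor: since $\frg_2$ is a $2$-dimensional ideal or subalgebra, its dual provides closed $1$-forms, and a natural candidate for $\alpha$ in Proposition~\ref{criterion}(i), or for $X,Y$ in (ii), is drawn from the subspace of $\frg^*$ (resp.\ $\frg$) adapted to the decomposition $\frg=\frg_1\oplus\frg_2$; I expect $\alpha\in\frg_2^*\subset\frg^*$ or $X,Y$ spanning $\frg_2$ to be the correct first attempt in most families, with a split into subcases according to whether $\frg_2=\R^2$ or $\frg_2=\frr_2$. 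Once this check is performed for each entry of the filtered list, the non-existence asserted in the proposition follows.
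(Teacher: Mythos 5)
Your proposal matches the paper's proof in both structure and substance: the paper likewise intersects the $4\oplus2$ half-flat list of \cite{FS1} with the symplectic condition (obtaining just four non-unimodular candidates, $A_{4,1}\oplus\frr_2$, $A_{4,9}^{-\frac12}\oplus\frr_2$, $A_{4,12}\oplus\frr_2$ and $\frr_2\oplus\frr_2\oplus\frr_2$, after discarding those already decomposable as $3\oplus3$) and then kills each one with Proposition~\ref{criterion}(i) for an explicit $\alpha$ ($e^5$, $e^4$, $e^3$ and $e^1+e^3$ respectively). The only minor divergence is that the paper never needs Proposition~\ref{criterion2} or obstruction (ii) here, and the successful $\alpha$'s mostly come from the dual of the $4$-dimensional factor rather than from $\frg_2^*$ as your heuristic suggests.
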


\begin{proof}
All the non-nilpotent Lie algebras of type $4\oplus 2$ with decomposable four-dimensional one are isomorphic to one of
Proposition~\ref{section3+3} except the Lie algebra $\mathfrak{r_2}\oplus\mathfrak{r_2}\oplus\mathfrak{r_2}$. So in order to avoid the study of the same Lie algebras several times, from now on we will refer to the $4\oplus2$ Lie algebras as the ones that being decomposable in the form $4\oplus2$ cannot be decomposed as $3\oplus3$.
The only $4\oplus2$ non-nilpotent Lie algebras admitting both half-flat structure \cite{FS1} and symplectic form are
the non-unimodular Lie algebras
given in Table~5.
These four possible cases are easily rejected thanks to the obstruction (i) in Proposition~\ref{criterion}:
it suffices to consider the 1-form $\alpha=e^5$ for $A_{4,1}\oplus\frr_2$, $\alpha=e^4$ for $A_{4,9}^{-\frac{1}{2}}\oplus\frr_2$,
$\alpha=e^3$ in case $A_{4,12}\oplus\frr_2$ and $\alpha=e^1+e^3$ for $\frr_2\oplus\frr_2\oplus\frr_2$.
\end{proof}

In the next result
we study the $5\oplus1$ decomposable Lie algebras, i.e. $\frg=\frg_1\oplus\R$.

\begin{proposition}\label{section5+1}
The only $5\oplus1$ non-nilpotent Lie algebras admitting symplectic half-flat structures are
$A_{5,7}^{-1,-1,1}\oplus\R$ and $A_{5,17}^{\alpha,-\alpha,1}\oplus\R$ with $\alpha\geq 0$.
\end{proposition}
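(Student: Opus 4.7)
The plan is to mirror exactly the strategy used in Propositions~\ref{section3+3} and~\ref{section4+2}: start from the half-flat classification of $5\oplus1$ Lie algebras in \cite{FS1}, restrict to those that also admit a symplectic form, exhibit explicit symplectic half-flat structures for the two families in the statement, and apply the obstructions in Propositions~\ref{criterion} and~\ref{criterion2} to discard the remaining candidates. Since a Lie algebra of the form $\mathfrak{g}_1\oplus\R$ is non-nilpotent precisely when $\mathfrak{g}_1$ is a non-nilpotent $5$-dimensional Lie algebra, we first extract from the list in \cite{FS1} the sublist of those $\mathfrak{g}_1$ for which $\mathfrak{g}_1\oplus\R$ carries both a half-flat $\SU(3)$ structure and a symplectic $2$-form; this filtered list will be displayed in a table in the Appendix (analogous to Tables~1--5) for the reader's convenience.

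Next, I would produce the positive examples. For $A_{5,7}^{-1,-1,1}\oplus\R$ the structure equations $(e^{15},-e^{25},-e^{35},e^{45},0,0)$ suggest trying the orthonormal coframe obtained from a suitable rotation of $\{e^1,\dots,e^6\}$ so that $F=f^{12}+f^{34}+f^{56}$ becomes closed; a natural guess is to pair the eigenvectors with opposite weights ($e^1$ with $e^2$, $e^3$ with $e^4$) and put $e^5,e^6$ in the third block, choosing the orthogonal transformation precisely so that the real part of $(f^1+if^2)(f^3+if^4)(f^5+if^6)$ becomes closed. The same recipe works for the family $A_{5,17}^{\alpha,-\alpha,1}\oplus\R$, where the two rotational $2\times2$ blocks of the adjoint action of $e_5$ again pair naturally into compatible planes. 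I would present the resulting $(F,\rho)$ in standard form in the Appendix table, relegating the verification of $dF=d\rho=0$, compatibility $F\wedge\rho=0$, normalization $\phi(\rho)=2\phi(F)$ and positive-definiteness of $g$ to a direct check.

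For the non-examples I would proceed Lie algebra by Lie algebra exactly as in the proofs of Propositions~\ref{unimod-indescomp-list}, \ref{5-nilradical} and~\ref{section3+3}. For each candidate $\mathfrak{g}=\mathfrak{g}_1\oplus\R$, write down the general element of $Z^2(\mathfrak{g})\times Z^3(\mathfrak{g})$ with symbolic coefficients $b_i,a_j$, impose stability of $F$ and compatibility $F\wedge\rho=0$ to reduce these coefficients, and then search for a $1$-form $\alpha$ or vectors $X,Y$ making $\alpha\wedge\tilde{J}^{*}_{\rho}\alpha\wedge F^2$ vanish identically, or forcing $F(\tilde{J}_\rho X,X)\,F(\tilde{J}_\rho Y,Y)\leq 0$; if these fail, fall back on Proposition~\ref{criterion2} by picking a vector $X\in\mathfrak{g}\oplus\R$ with $(i_X\varphi)^3=0$ for all $\varphi\in Z^3((\mathfrak{g}\oplus\R)^*)$. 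Because the extra $\R$-summand drastically enlarges the space of closed $3$-forms compared with the indecomposable case, Proposition~\ref{criterion2} will typically be less effective here, so I expect Proposition~\ref{criterion} to carry most of the work, with the choice of test vector $X$ or test $1$-form $\alpha$ dictated by the directions annihilated by the adjoint action of the $\R$-summand.

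The main obstacle I anticipate is purely combinatorial: unlike the indecomposable $5$-dimensional nilradical case, the $5\oplus1$ list from \cite{FS1} can contain multi-parameter families, and one has to find a single test vector (or $1$-form) whose obstruction holds \emph{uniformly} in the parameters, or else split the parameter range into finitely many subcases. A secondary subtlety is verifying that the family $A_{5,17}^{\alpha,-\alpha,1}\oplus\R$ indeed produces a genuine symplectic half-flat structure for every $\alpha\geq 0$ (including the degenerate limit $\alpha=0$, where the algebra specialises but the structure must remain non-degenerate and positive-definite), and checking that the case $\alpha<0$ is excluded either by isomorphism with the $\alpha\geq0$ case or by one of the two obstructions.
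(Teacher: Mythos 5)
Your proposal follows essentially the same route as the paper: filter the $5\oplus1$ half-flat list of \cite{FS1} by the existence of a symplectic form (Table~6), exhibit explicit symplectic half-flat structures on $A_{5,7}^{-1,-1,1}\oplus\R$ and $A_{5,17}^{\alpha,-\alpha,1}\oplus\R$ (the paper simply observes that the half-flat structures already given in \cite{FS1} have $dF=0$), and eliminate the remaining candidates case by case; the paper in fact needs only Proposition~\ref{criterion}(ii) (mostly via $F(\tilde{J}_\rho e_1,e_1)=0$, with equalities of $Z^2$ and $Z^3$ used to transfer the computation between algebras and to handle the parameter families uniformly), never resorting to Proposition~\ref{criterion2}, exactly as you anticipated. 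The plan is sound and matches the paper's argument.
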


\begin{proof}
As in the previous proposition, every Lie algebra of type $5\oplus1$ with decomposable five-dimensional part is isomorphic to one of types $3 \oplus 3$ or $4 \oplus 2 $. In the list of $5\oplus1$ non-nilpotent Lie algebras admitting half-flat structures \cite{FS1}, those having also symplectic form are
given in Table~6 of the Appendix.
(In particular, the only $5\oplus1$ non-unimodular solvable Lie algebras
having both half-flat structure and symplectic form are $A_{5,36}\oplus\R$ and $A_{5,37}\oplus\R$.)

In the case of $A_{5,7}^{-1,\beta,-\beta}\oplus\R$ with $0<\beta<1$, $A_{5,8}^{-1}\oplus\R$, $A_{5,19}^{-1,2}\oplus\R$, $A_{5,36}\oplus\R$ and $A_{5,37}\oplus\R$ we find that
obstruction (ii) in Proposition~\ref{criterion} is satisfied
because $F(\tilde{J}_\rho e_1,e_1)=0$.

As $Z^2(A_{5,13}^{-1,0,\gamma}\oplus\R)=Z^2(A_{5,17}^{0,0,\gamma}\oplus\R)=Z^2(A_{5,7}^{-1,\beta,-\beta}\oplus\R)$ and $Z^3(A_{5,13}^{-1,0,\gamma}\oplus\R)=Z^3(A_{5,17}^{0,0,\gamma}\oplus\R)=Z^3(A_{5,7}^{-1,\beta,-\beta}\oplus\R)$
for any $\gamma$,
the same conclusion as for $A_{5,7}^{-1,\beta,-\beta}\oplus\R$ is valid.
Moreover, we have that $Z^2(A_{5,14}^0\oplus\R)=Z^2(A_{5,8}^{-1}\oplus\R)$ and $Z^3(A_{5,14}^0\oplus\R)=Z^3(A_{5,8}^{-1}\oplus\R)$.
Therefore, the Lie algebras $A_{5,13}^{-1,0,\gamma}\oplus\R$, $A_{5,17}^{0,0,\gamma}\oplus\R$ and $A_{5,14}^0\oplus\R$
do not admit symplectic half-flat structure.

For the case $\frg=A_{5,15}^{-1}\oplus\R$
we have that an arbitrary pair
$(F,\rho)\in Z^2(\frg)\times Z^3(\frg)$ is given by
$$
\begin{array}{l}
F=b_1(e^{14}-e^{23})+b_2e^{15}+b_3e^{24}+b_4e^{25}+b_5e^{35}+b_6e^{45}+b_7e^{56},\\[5pt]
\rho=a_1e^{125}+a_2e^{135}+a_3e^{145}+a_4(e^{146}-e^{236})+a_5e^{156}+a_6e^{235}+a_7e^{245}\\
\phantom{m}+a_8e^{246}+a_9e^{256}+a_{10}e^{345}+a_{11}e^{356}+a_{12}e^{456}.
\end{array}
$$
It follows that $F^3\not=0$ if and only if $b_1,b_7\neq 0$, so we can take $b_1=1$.
When imposing $F\wedge\rho=0$ we have that $a_3 = a_6 - a_2 b_3$, $a_4 = a_5 = 0$, $a_9 = a_8 b_2$, $a_{11} = 0$
and $a_{12} = -a_8 b_5$.
A direct calculation shows that $\tilde{J}_\rho e_1=a_2 a_8\, e_1$,
so $F(\tilde{J}_\rho e_1,e_1)=0$.
Similarly, for $A_{5,18}^0\oplus\R$ we get that $\tilde{J}_\rho e_1$ is a multiple of $e_1$.
Thus, by Proposition~\ref{criterion} the Lie algebras $A_{5,15}^{-1}\oplus\R$ and $A_{5,18}^0\oplus\R$ do not admit
symplectic half-flat structure.

Finally, the half-flat structures given in \cite{FS1} for $A_{5,7}^{-1,-1,1}\oplus\R$ and $A_{5,17}^{\alpha,-\alpha,1}\oplus\R$
with $\alpha\geq 0$ satisfy that $dF=0$, so they provide examples of symplectic half-flat structures (see Table~6).
\end{proof}

Notice that the previous propositions imply that if $\frg$ is a decomposable non-unimodular
solvable Lie algebra then $\frg$ does not admit symplectic half-flat structure.

\begin{remark}\label{lattice2}
{\rm
The Lie algebra $\fre(1,1)\oplus\fre(1,1)$ is unimodular
and the corresponding simply connected Lie group admits a compact quotient
as it is shown in \cite{T}.
For the simply connected Lie groups corresponding to $A_{5,7}^{-1,-1,1}$
and $A_{5,17}^{\alpha,-\alpha,1}$ with $\alpha\geq 0$, conditions for the
existence of lattice are given in \cite[Propositions 7.2.1 and 7.2.14]{B}. In particular,
there is a lattice for the cases $A_{5,7}^{-1,-1,1}$ and $A_{5,17}^{\alpha,-\alpha,1}$ for
$\alpha=0$ and for some positive $\alpha$.
}
\end{remark}

\section{Symplectic half-flat structures and SUSY equations of type IIA} \label{application}

In this section we consider as in \cite{And,GMPT} type II supergravity backgrounds which are warped products of the
Minkowski space $\mathbb{R}^{3,1}$ and a compact manifold $M$ of dimension~6, with non-trivial fluxes living
on the internal manifold $M$.
In order to get (at least) $N=1$ supersymmetry, it is required the existence of (at least) a pair
of globally defined and non-vanishing spinors on $M$ satisfying the SUSY conditions.
The existence of such a pair implies a reduction of the structure group of the tangent bundle
to a subgroup $G\subset SO(6)$, so that different types of $G$ structures arise.

The existence of a globally defined non-vanishing spinor $\eta_{+}$ on $M$
defines a reduction to SU(3), that is, $M$ is endowed with an SU(3) structure $(F,\Psi)$.
On the other hand, an SU(2) structure on a $6$-dimensional manifold $M$
is defined by two orthogonal globally defined spinors $\eta_{+}$ and $\chi_{+}$,
which we can suppose of unit norm, or equivalently by an almost Hermitian structure $(J, g)$,
a $(1,0)$-form $\alpha$ such that $\| \alpha \|^2 =2$,
a real $2$-form $\omega$ and a $(2,0)$-form $\Omega$
satisfying the conditions
$$
\omega^2 =  \frac{1}{2} \Omega \wedge \overline \Omega \neq 0,\quad
\omega \wedge \Omega =0, \quad  \Omega \wedge \Omega =0
$$
and
$$
i_{\alpha} \Omega =0, \quad i_{\alpha} \omega =0,
$$
where $i_{\alpha}$ denotes the contraction by the vector field dual to $\alpha$.

Notice that the SU(2) structure $(\alpha,\omega,\Omega)$ is naturally embedded in the SU(3) structure
defined by $\eta_+$ as
\begin{equation}\label{asocSU(3)}
F = \omega + \frac{i}{2} \alpha \wedge \overline \alpha, \quad \
\Psi=\alpha \wedge \Omega;
\end{equation}
and conversely, given an SU(3) structure $(F,\Psi)$ and
a $(1,0)$-form $\alpha$ of norm~$\sqrt{2}$ on $M$, then
$\omega = F - \frac{i}{2} \alpha \wedge \overline \alpha$ and
$\Omega = \frac{1}{2} i_{\overline \alpha}  \Psi$
provide an SU(2) structure.

Now a rotation $k_{||} \eta_+ + k_{\perp}\chi_+$ of the two orthogonal spinors $\eta_+$ and $\chi_+$, where
$k_ {||}= \cos (\phi)$ and $k_{\perp}= \sin (\phi)$, $\phi \in [0, \frac{\pi}{2}]$,
gives rise to the family of SU(2) structures $(\alpha, \tilde \omega_{\phi}, \tilde \Omega_{\phi})$ on $M$ given by
\begin{equation} \label{familysu(2)structures}
\begin{array}{l}
\tilde \omega_{\phi} = \cos (2 \phi)\, \omega +  \sin(2 \phi) Re (\Omega),\\[6pt]
\tilde \Omega_{\phi}  = - \sin (2 \phi)\, \omega +\cos (2 \phi) Re
(\Omega) + i Im (\Omega).
\end{array}
\end{equation}

\begin{definition}\cite{And} {\rm The SU(2) structure defined by
\eqref{familysu(2)structures} is called} intermediate
{\rm if $k_{||}$ and $k_{\perp}$ are both different from zero.}
\end{definition}

By working in the projection (eigen)basis, Andriot obtained in \cite{And} the SUSY equations of type IIA for intermediate SU(2) structures
in the following form:
\begin{equation}\label{SUSYeqIIA}
\left \{ \begin{array}{l}
d (Re (\alpha)) = 0,\\[6pt]
d(Re(\Omega)_{\perp}) = k_{||} k_{\perp} Re (\alpha)\wedge d(Im(\alpha)),\\[6 pt]
d (Im (\Omega)) \wedge Re (\alpha) = -d(Im (\alpha)\wedge Re (\Omega)_{||}),
\end{array} \right.
\end{equation}
where $Re( \Omega)_{\|} = \frac{1}{2} ((1 - \cos(2 \phi)) Re(\Omega) + \sin(2 \phi) \omega)$
and $Re(\Omega)_{\perp} = \frac{1}{2} ((1 +\cos(2 \phi)) Re(\Omega) - \sin(2 \phi) \omega)$.

Actually, the SUSY equations of type IIA consist of equations \eqref{SUSYeqIIA} together with the corresponding fluxes $F_0, F_2, F_4$
and $H$, but they can be obtained explicitly from these equations (see \cite{And} for details).

Next we recall the precise relationship between solutions of \eqref{SUSYeqIIA} and
the symplectic half-flat condition. In the following result, by a {\em symplectic half-flat} SU(2)
{\em structure} $(\alpha, \omega, \Omega)$ we mean that the associated
SU(3) structure given by \eqref{asocSU(3)} is symplectic half-flat.

\begin{theorem}\label{IIA}\cite[Theorem 3.1]{FU}
If $(M,\alpha, \omega, \Omega)$ is a $6$-dimensional manifold endowed with
an SU(2) structure such that the forms $\alpha$, $Re(\Omega)_{||},
Re(\Omega)_{\perp}$ and $Im(\Omega)$ satisfy the
equations~\eqref{SUSYeqIIA}, then $M$ admits a symplectic
half-flat SU(2) structure $(\hat \alpha, \hat \omega, \hat \Omega)$ with $d (Re (\hat \alpha)) =0$.
Conversely, if $M$  has a symplectic half-flat SU(2) structure
$(\hat \alpha, \hat \omega, \hat \Omega)$ such
that $d(Re( \hat \alpha)) =0$, then the forms $\alpha, Re (\Omega)_{||},
Re (\Omega)_{\perp}$ and $Im (\Omega)$ defined by
$$
 \frac{1} {k_{\perp}} Re (\Omega)_{\perp} =\hat \omega, \quad  Im (\Omega) - i \frac{1} {k_{||}} Re (\Omega)_{||} =  \hat
\Omega, \quad   Re(\alpha) + i k_{||} Im (\alpha) = \hat  \alpha
$$
provide a solution of the equations \eqref{SUSYeqIIA}.
\end{theorem}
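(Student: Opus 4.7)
The plan is to reduce Theorem~\ref{IIA} to a direct differential-form computation: the closedness of $\hat F$ and $Re(\hat\Psi)$, where $(\hat F,\hat\Psi)$ is the $\SU(3)$ structure built from the $\SU(2)$ data $(\hat\alpha,\hat\omega,\hat\Omega)$ via the formulas $\hat F=\hat\omega+\tfrac{i}{2}\hat\alpha\wedge\overline{\hat\alpha}$ and $\hat\Psi=\hat\alpha\wedge\hat\Omega$ of \eqref{asocSU(3)}, translates under the given substitutions into the three SUSY equations \eqref{SUSYeqIIA}. Both directions of the theorem then become two readings of the same chain of equivalences.

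First I would unfold $\hat F$ and $Re(\hat\Psi)$ in terms of the original data. From $\hat\alpha=Re(\alpha)+ik_{||}Im(\alpha)$ a direct expansion gives $\hat\alpha\wedge\overline{\hat\alpha}=-2ik_{||}\,Re(\alpha)\wedge Im(\alpha)$, whence
\[\hat F=\tfrac{1}{k_\perp}Re(\Omega)_\perp+k_{||}\,Re(\alpha)\wedge Im(\alpha),\]
while expanding $\hat\Psi=(Re(\alpha)+ik_{||}Im(\alpha))\wedge(Im(\Omega)-\tfrac{i}{k_{||}}Re(\Omega)_{||})$ and taking real parts yields
\[Re(\hat\Psi)=Re(\alpha)\wedge Im(\Omega)+Im(\alpha)\wedge Re(\Omega)_{||}.\]

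Next I would differentiate. Under $dRe(\hat\alpha)=dRe(\alpha)=0$, i.e.\ the first SUSY equation, one finds
\[d\hat F=\tfrac{1}{k_\perp}\,d(Re(\Omega)_\perp)-k_{||}\,Re(\alpha)\wedge dIm(\alpha),\]
which vanishes precisely when $d(Re(\Omega)_\perp)=k_{||}k_\perp\,Re(\alpha)\wedge dIm(\alpha)$, i.e.\ the second SUSY equation. Similarly, using the degree-parity identity $dIm(\Omega)\wedge Re(\alpha)=-Re(\alpha)\wedge dIm(\Omega)$,
\[dRe(\hat\Psi)=-Re(\alpha)\wedge dIm(\Omega)+d(Im(\alpha)\wedge Re(\Omega)_{||})\]
vanishes precisely when $dIm(\Omega)\wedge Re(\alpha)=-d(Im(\alpha)\wedge Re(\Omega)_{||})$, i.e.\ the third SUSY equation. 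Reading these implications forward establishes the converse direction of the theorem; reading them backward establishes the direct direction.

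The step that I expect to require the most care is verifying that $(\hat\alpha,\hat\omega,\hat\Omega)$ genuinely constitutes an $\SU(2)$ structure, that is, that the algebraic compatibility relations $\hat\omega^2=\tfrac12\hat\Omega\wedge\overline{\hat\Omega}$, $\hat\omega\wedge\hat\Omega=0$, $\hat\Omega\wedge\hat\Omega=0$ and $i_{\hat\alpha}\hat\omega=i_{\hat\alpha}\hat\Omega=0$ all hold, rather than a mere triple of forms with the correct closedness properties. The scaling factors $1/k_\perp$ and $1/k_{||}$ appearing in the substitutions are calibrated precisely for this: expanding each identity and invoking $k_{||}^2+k_\perp^2=1$, the decomposition $Re(\Omega)=Re(\Omega)_{||}+Re(\Omega)_\perp$, and the original $\SU(2)$ relations $\omega^2=Re(\Omega)^2=Im(\Omega)^2$, $\omega\wedge Re(\Omega)=\omega\wedge Im(\Omega)=Re(\Omega)\wedge Im(\Omega)=0$ and $i_\alpha\omega=i_\alpha\Omega=0$ for $(\alpha,\omega,\Omega)$ should reduce everything to these identities. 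The hypothesis that the structure be \emph{intermediate} ($k_{||},k_\perp\neq 0$) is essential both to keep the scalings finite and to invert the substitutions when running the equivalence in either direction.
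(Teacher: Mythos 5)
First, a point of order: the paper does not actually prove this statement --- it is quoted verbatim from \cite[Theorem 3.1]{FU} --- so there is no internal proof to measure you against. Your differential computation is correct and is certainly the engine of the result: writing $\hat F=\hat\omega+\frac{i}{2}\hat\alpha\wedge\overline{\hat\alpha}$ and $\hat\Psi=\hat\alpha\wedge\hat\Omega$ as in \eqref{asocSU(3)}, the stated substitutions turn $d\hat F=0$ and $d\,Re(\hat\Psi)=0$ exactly into the second and third equations of \eqref{SUSYeqIIA}, the first being $d(Re(\hat\alpha))=d(Re(\alpha))=0$; your sign bookkeeping (including $dIm(\Omega)\wedge Re(\alpha)=-Re(\alpha)\wedge dIm(\Omega)$ for the $3$-form $dIm(\Omega)$) checks out.

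The gap is exactly in the step you flagged and then waved through. Using $k_{||}=\cos\phi$, $k_{\perp}=\sin\phi$ one has $Re(\Omega)_{||}=k_{\perp}^2Re(\Omega)+k_{||}k_{\perp}\,\omega$ and $Re(\Omega)_{\perp}=k_{||}^2Re(\Omega)-k_{||}k_{\perp}\,\omega$, and the $\SU(2)$ axioms of Section~5 give $\omega^2=Re(\Omega)^2=Im(\Omega)^2$ with all mixed products vanishing. A direct expansion then yields $\hat\Omega\wedge\hat\Omega=\bigl(1-\tfrac{k_{\perp}^2}{k_{||}^2}\bigr)\omega^2$ and $\hat\omega^2=\tfrac{k_{||}^2}{k_{\perp}^2}\,\omega^2$ versus $\tfrac12\hat\Omega\wedge\overline{\hat\Omega}=\tfrac{1}{2k_{||}^2}\,\omega^2$. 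So for $\phi\neq\pi/4$ the triple $(\hat\alpha,\hat\omega,\hat\Omega)$ with the literal scalings is \emph{not} an $\SU(2)$ structure in the sense used in this paper; invoking $k_{||}^2+k_{\perp}^2=1$ does not rescue it, contrary to your claim that the factors $1/k_{\perp}$ and $1/k_{||}$ are ``calibrated precisely'' for this. What does survive your computation is that $\hat F$ and $\hat\rho=Re(\hat\Psi)$ are closed and compatible (one checks $\hat\omega\wedge Im(\Omega)=\hat\omega\wedge Re(\Omega)_{||}=0$, whence $\hat F\wedge\hat\rho=0$), so the forward direction can still be completed, but only after a genuine renormalization step: a constant rescaling of $\hat\rho$ leaves closedness, $J_{\hat\rho}$ and the induced metric unchanged while adjusting $\phi(\hat\rho)=2\phi(\hat F)$, and positive-definiteness must then be verified. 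A symmetric normalization issue arises in the converse direction when one tries to reassemble an honest intermediate $\SU(2)$ structure $(\alpha,\omega,\Omega)$ from the substituted forms. These repairs are where the content beyond the two-line differentiation lies, and they are what is actually carried out in \cite{FU}.
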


Particular examples on compact solvmanifolds corresponding to
$\frg_{5,1}\!\oplus\R$, $A_{5,7}^{-1,-1,1}\!\oplus\R$ and $\frg_{6,118}^{0,-1,-1}$
are given in \cite{FU}.
Other solvable Lie algebras
related to the SUSY equations appear in \cite{And,AGMP,GMPT}.
As a consequence of Theorem~\ref{clasif} we obtain the complete list of compact solvmanifolds providing solutions to the
equations~\eqref{SUSYeqIIA}.

\begin{proposition}\label{IIA-solutions}
Any $6$-dimensional compact solvmanifold admitting an invariant symplectic half-flat structure
also admits a solution of the SUSY equations of type~IIA.
\end{proposition}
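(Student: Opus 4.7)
The plan is to reduce the claim to the converse direction of Theorem~\ref{IIA}: starting with the invariant symplectic half-flat $\SU(3)$ structure $(F,\Psi)$ on $\Gamma\backslash G$, I will construct an invariant symplectic half-flat $\SU(2)$ structure $(\hat\alpha,\hat\omega,\hat\Omega)$ whose associated $\SU(3)$ structure via~\eqref{asocSU(3)} is precisely $(F,\Psi)$ and which satisfies $d(\mathrm{Re}\,\hat\alpha)=0$. Theorem~\ref{IIA} will then produce invariant differential forms solving~\eqref{SUSYeqIIA}; since all forms involved are left-invariant, they descend to the compact quotient $\Gamma\backslash G$, yielding the desired solution.

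Because $\Gamma\backslash G$ is compact, the group $G$ is unimodular, so by Theorem~\ref{clasif} the Lie algebra $\frg$ is one of the eight algebras listed there (or abelian, which is a trivial case). The first step is to exhibit a nonzero closed $1$-form $\beta\in\frg^*$, i.e.\ a linear functional vanishing on the derived ideal $[\frg,\frg]$. A direct inspection of the structure equations shows this is immediate: $e^1$ is closed for $\fre(1,1)\oplus\fre(1,1)$, $\frg_{5,1}\oplus\R$ and $\frg_{6,N3}$, while $e^5$ or $e^6$ is closed in each of the remaining five algebras. Rescaling with respect to the invariant metric $g$ determined by $(F,\Psi)$, I may assume $\|\beta\|_g=1$.

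The second step is the $\SU(2)$ refinement. Let $J$ denote the almost complex structure determined by $\mathrm{Re}\,\Psi$ via~\eqref{def-J}, and set $\hat\alpha=\beta-i J^*\beta$. Since $J$ is a $g$-isometry, $\beta$ and $J^*\beta$ are orthogonal real unit $1$-forms, so $\hat\alpha$ is of type $(1,0)$, has $\|\hat\alpha\|^2=2$, and satisfies $\mathrm{Re}\,\hat\alpha=\beta$, which is closed. Define
$$
\hat\omega=F-\tfrac{i}{2}\hat\alpha\wedge\overline{\hat\alpha},\qquad \hat\Omega=\tfrac{1}{2}\,i_{\overline{\hat\alpha}}\Psi.
$$
A routine calculation, using that $\Psi$ is of type $(3,0)$ and that $\|\hat\alpha\|^2=2$, gives $\hat\alpha\wedge\hat\Omega=\Psi$ and $\hat\omega+\tfrac{i}{2}\hat\alpha\wedge\overline{\hat\alpha}=F$. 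Hence $(\hat\alpha,\hat\omega,\hat\Omega)$ is an $\SU(2)$ structure whose associated $\SU(3)$ structure via~\eqref{asocSU(3)} is exactly $(F,\Psi)$, so it is symplectic half-flat by definition, with $d(\mathrm{Re}\,\hat\alpha)=d\beta=0$.

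The converse part of Theorem~\ref{IIA} then yields invariant forms $\alpha$, $\mathrm{Re}(\Omega)_{\|}$, $\mathrm{Re}(\Omega)_{\perp}$ and $\mathrm{Im}(\Omega)$ solving the system~\eqref{SUSYeqIIA}, completing the proof. The only delicate point is the routine verification of the $\SU(2)$-compatibility relations for $(\hat\alpha,\hat\omega,\hat\Omega)$, namely $\hat\omega\wedge\hat\Omega=0$, $\hat\Omega\wedge\hat\Omega=0$, $\hat\omega^2=\tfrac{1}{2}\hat\Omega\wedge\overline{\hat\Omega}$ and the contraction identities $i_{\hat\alpha^{\sharp}}\hat\omega=i_{\hat\alpha^{\sharp}}\hat\Omega=0$; all of these follow from the $\SU(3)$-normalizations satisfied by $(F,\Psi)$ together with the type of $\hat\alpha$, so no substantial obstacle is expected.
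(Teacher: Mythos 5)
Your argument is correct, and it follows the same overall strategy as the paper: everything is reduced to the converse direction of Theorem~\ref{IIA} by producing a symplectic half-flat SU(2) structure with $d(\mathrm{Re}\,\hat\alpha)=0$. Where you differ is in how that SU(2) structure is obtained. The paper proceeds case by case: for each of the eight Lie algebras of Theorem~\ref{clasif} it writes down an explicit triple $(\alpha,\omega,\Omega)$ (e.g.\ $\alpha=e^1+i\,e^4$, $\omega=e^{23}+2e^{56}$, $\Omega=(e^2+i e^3)\wedge((e^5-e^6)+i(e^5+e^6))$ for $\fre(1,1)\oplus\fre(1,1)$), chosen so that $\mathrm{Re}\,\alpha$ is closed and the associated SU(3) structure is symplectic half-flat. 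You instead give a uniform construction: take the \emph{given} invariant structure $(F,\Psi)$, pick any closed unit $1$-form $\beta$ (which exists since $[\frg,\frg]\subsetneq\frg$ for any nonzero solvable $\frg$ --- so the appeal to the classification and the case-by-case inspection of closed $1$-forms is not even needed), set $\hat\alpha=\beta-iJ^*\beta$, and use the SU(3)$\to$SU(2) reduction formulas already recorded in Section~\ref{application} of the paper; since $J$ is a $g$-isometry with $J^2=-1$, the forms $\beta$ and $J^*\beta$ are orthonormal and $\hat\alpha$ is a $(1,0)$-form of norm $\sqrt2$ whose associated SU(3) structure is exactly $(F,\Psi)$. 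This buys you a cleaner and slightly stronger statement (the SUSY solution is built from the given structure itself, and the argument visibly generalizes to any solvmanifold with an invariant symplectic half-flat structure), at the cost of deferring the SU(2)-compatibility checks to the general reduction lemma rather than exhibiting them concretely; the paper's explicit tables, by contrast, give ready-to-use solutions on each solvmanifold. Both proofs are complete and rest on the same Theorem~\ref{IIA}.
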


\begin{proof}
From Theorem~\ref{IIA} it is sufficient to show that the solvmanifolds corresponding to the Lie algebras
listed in Theorem~\ref{clasif}
admit an SU(2) structure $(\alpha,\omega,\Omega)$ with $Re(\alpha)$ a closed form and such that the
SU(3) structure $(F = \omega + \frac{i}{2} \alpha \wedge \overline \alpha,
\Psi=\alpha \wedge \Omega)$ is symplectic half-flat.
We give an explicit example for each case:

For $\fre(1,1)\oplus\fre(1,1)$ we consider the SU(2) structure $(\alpha,\omega,\Omega)$ given by
$\alpha=e^{1}+i\,e^{4}$, $\omega=e^{23}+2e^{56}$ and $\Omega=(e^{2}+i\,e^{3})\wedge((e^{5}-e^{6})+i(e^{5}+e^{6}))$.

For $\frg_{5,1}\!\oplus\R$ we consider the SU(2) structure $(\alpha,\omega,\Omega)$ given by
$\alpha=e^{1}+i\,e^{4}$, $\omega=e^{25}-e^{36}$ and $\Omega=(e^{2}+i\,e^{5})\wedge(-e^{3}+i\,e^{6})$.

For $A_{5,7}^{-1,-1,1}\oplus\R$ we consider the SU(2) structure $(\alpha,\omega,\Omega)$ given by
$\alpha=e^{5}+i\,e^{6}$, $\omega=-e^{13}+e^{24}$ and $\Omega=(e^{3}+i\,e^{1})\wedge(e^{2}+i\,e^{4})$.

For $A_{5,17}^{\alpha,-\alpha,1}\oplus\R$ we consider the SU(2) structure $(\alpha,\omega,\Omega)$ given by
$\alpha=e^{5}+i\,e^{6}$, $\omega=e^{13}+e^{24}$ and $\Omega=(e^{1}+i\,e^{3})\wedge(e^{2}+i\,e^{4})$.

For the nilpotent Lie algebra
$\frg_{6,N3}$, since $e^1$ is closed, we can consider the SU(2) structure $(\alpha,\omega,\Omega)$ given by
$\alpha=e^{1}+i\,e^{6}$, $\omega=-2e^{34}-e^{25}$ and $\Omega=(2e^{4}+i\,e^{3})\wedge(e^{5}+i\,e^{2})$.

For $\frg_{6,38}^{0}$, since $e^6$ is closed, we can consider the SU(2) structure $(\alpha,\omega,\Omega)$ given by
$\alpha=e^{6}-2i\,e^{1}$, $\omega=e^{34}+e^{52}$ and $\Omega=(e^{3}+i\,e^{4})\wedge(e^{5}+i\,e^{2})$.

For $\frg_{6,54}^{0,-1}$ we consider the SU(2) structure $(\alpha,\omega,\Omega)$ given by
$\alpha=e^{5}+i\,e^{6}$, $\omega=e^{14}+e^{23}$ and $\Omega=(e^{1}+i\,e^{4})\wedge(e^{2}+i\,e^{3})$.

Finally, for the Lie algebra $\frg_{6,118}^{0,-1,-1}$ we consider the structure $(\alpha,\omega,\Omega)$ given by
$\alpha=e^{6}+i\,e^{5}$, $\omega=e^{14}+e^{23}$ and $\Omega=(e^{1}+i\,e^{4})\wedge(e^{2}+i\,e^{3})$.
\end{proof}

\section{Appendix}\label{apendice}
In this appendix we summarize the results obtained in the before sections. In Table~1,
we show the structure equations of all {\em indecomposable unimodular}
solvable Lie algebras of dimension $6$
having symplectic forms, obtained in \cite{M}.
Also in Table~2  we show
the structure equations of {\em indecomposable non-unimodular} solvable Lie algebras
with $5$-dimensional nilradical admitting symplectic forms
according the proof of Proposition $3.2$,
while in Table~3 we consider the  {\em indecomposable non-unimodular}
solvable Lie algebras with $4$-dimensional nilradical
having both half-flat structure and symplectic forms,
but only two of those Lie algebras have symplectic half-flat structure.
Moreover, we show the structure equations of all decomposable (non-nilpotent)
solvable Lie algebras of dimension $6$ which admit both symplectic and half-flat structures (Tables~4, 5 and~6).

\vskip2cm

\centerline{
\begin{tabular}{|c|c|c|}
\hline
$\frg$& \bf{str. equations}& \bf{symplectic half-flat str.}\\
\hline
$\frg_{6,3}^{0,-1}$&$(e^{26},e^{36},0,e^{46},-e^{56},0)$&$-$\\\hline
$\frg_{6,10}^{0,0}$&$(e^{26},e^{36},0,e^{56},-e^{46},0)$&$-$\\\hline
$\frg_{6,13}^{-1,\frac{1}{2},0}$&$(-\frac{1}{2}e^{16}+e^{23},-e^{26},\frac{1}{2}e^{36},e^{46},0,0)$&$-$\\\hline
$\frg_{6,13}^{\frac{1}{2},-1,0}$&$(-\frac{1}{2}e^{16}+e^{23},\frac{1}{2}e^{26},-e^{36},e^{46},0,0)$&$-$\\\hline
$\frg_{6,15}^{-1}$&$(e^{23},e^{26},-e^{36},e^{26}+e^{46},e^{36}-e^{56},0)$&$-$\\\hline
$\frg_{6,18}^{-1,-1}$&$(e^{23},-e^{26},e^{36},e^{36}+e^{46},-e^{56},0)$&$-$\\\hline
$\frg_{6,21}^0$&$(e^{23},0,e^{26},e^{46},-e^{56},0)$&$-$\\\hline
$\frg_{6,36}^{0,0}$&$(e^{23},0,e^{26},-e^{56},e^{46},0)$&$-$\\\hline
$\frg_{6,38}^{0}$&$(e^{23},-e^{36},e^{26},e^{26}-e^{56},e^{36}+e^{46},0)$& $F=-2e^{16}+e^{34}-e^{25}$\\
&&$\rho=-2e^{135}-2e^{124}+e^{236}-e^{456}$\\\hline
$\frg_{6,54}^{0,-1}$&$(e^{16}+e^{35},-e^{26}+e^{45},e^{36},-e^{46},0,0)$& $F=e^{14}+e^{23}+e^{56}$\\
&&$\rho=e^{125}-e^{136}+e^{246}+e^{345}$\\
\hline
$\frg_{6,70}^{0,0}$&$(-e^{26}+e^{35},e^{16}+e^{45},-e^{46},e^{36},0,0)$&$-$\\\hline
$\frg_{6,78}$&$(-e^{16}+e^{25},e^{45},e^{24}+e^{36}+e^{46},e^{46},-e^{56},0)$&$-$\\
\hline
$\frg_{6,118}^{0,-1,-1}$&$(-e^{16}+e^{25},-e^{15}-e^{26},e^{36}-e^{45},e^{35}+e^{46},0,0)$& $F=e^{14}+e^{23}-e^{56}$\\
&&$\rho=e^{126}-e^{135}+e^{245}+e^{346}$\\
\hline
$\frn_{6,84}^{\pm 1}$&$(-e^{45},-e^{15}-e^{36},-e^{14}+e^{26} \mp e^{56},e^{56},-e^{46},0)$&$-$\\
\hline
\end{tabular}
}

\bigskip

\centerline{{\bf Table 1.} Indecomposable unimodular solvable (non-nilpotent) Lie algebras}

\centerline{admitting symplectic structures \cite{M}.}

\medskip

\vfill\eject

\centerline{
\begin{tabular}{|c|c|c|}\hline
$\mathfrak{g}$ & \bf{str. equations} & \bf{symplectic half-flat str.}\\\hline
$A_{6,13}^{-\frac{2}{3},\frac{1}{3},-1}$ & $(-\frac{1}{3}e^{16}+e^{23}, -\frac{2}{3}e^{26}, \frac{1}{3}e^{36}, e^{46}, -e^{56}, 0)$ & $F=-2e^{16}+e^{34}+e^{52}$\\
&&$\rho=-2e^{135}-2e^{124}-e^{356}+e^{246}$\\\hline
$A_{6,39}^{\frac{3}{2},-\frac{3}{2}}$ & $(-\frac{1}{2}e^{16}+e^{45},e^{15}+\frac{1}{2}e^{26},\frac{3}{2}e^{36},-\frac{3}{2}e^{46},e^{56},0)$ & $-$\\\hline
$A_{6,39}^{1,-1}$ & $(e^{45},e^{15}+e^{26},e^{36},-e^{46},e^{56},0)$ & $-$\\\hline
$A_{6,42}^{-1}$ & $(e^{45},e^{15}+e^{26},e^{36}+e^{56},-e^{46},e^{56},0)$ & $-$\\\hline
$A_{6,51}^{\pm 1}$ & $(e^{45},e^{15} \pm e^{46},e^{36},0,0,0)$ & $-$\\\hline
$A_{6,54}^{-1,-2}$ & $(e^{16}+e^{35},-2e^{26}+e^{45},2e^{36},-e^{46},-e^{56},0)$ &$-$\\\hline
$A_{6,54}^{\alpha,\alpha-1}$ & $(e^{16}+e^{35},(\alpha\!-\!1)e^{26}+e^{45},(1\!-\!\alpha)e^{36},-e^{46}, \alpha e^{56},0)$&$-$\\
$0\!<\!\alpha\!<\!2$& &\\\hline
$A_{6,54}^{2,1}$ & $(e^{16}+e^{35},e^{26}+e^{45},-e^{36},-e^{46}, 2e^{56},0)$&$F=e^{31}+e^{42}+2e^{65}$\\
& &$\rho=e^{346}+e^{235}-e^{145}-2e^{126}$\\\hline
$A_{6,56}^1$ & $(e^{16}+e^{35}, e^{36}+e^{45},0,-e^{46},e^{56},0)$ & $-$\\\hline
$A_{6,65}^{1,2}$ & $(e^{16}+e^{35}, e^{16}+e^{26}+e^{45},-e^{36},e^{36}-e^{46},2e^{56},0)$ & $-$ \\\hline
$A_{6,70}^{\alpha,\frac{\alpha}{2}}$ & $(\frac{\alpha}{2} e^{16}-e^{26}+e^{35}, e^{16}+\frac{\alpha}{2}e^{26}+e^{45},$ & $F=e^{13}+e^{24}-\alpha e^{65}$\\
$\alpha\not=0$&$-\frac{\alpha}{2}e^{36}-e^{46},e^{36}-\frac{\alpha}{2}e^{46},\alpha e^{56},0)$&$\rho=-\alpha e^{126}-e^{145}+e^{235}+\alpha e^{346}$ \\\hline
$A_{6,71}^{-\frac{3}{2}}$ & $(\frac{3}{2}e^{16}+e^{25}, \frac{1}{2}e^{26}+e^{35},-\frac{1}{2}e^{36}+e^{45},-\frac{3}{2}e^{46},e^{56},0)$ & $F=e^{41}+e^{23}+2e^{56}$ \\
&&$\rho=-e^{245}+2e^{346}-2e^{126}-e^{135}$\\\hline
$A_{6,76}^{-3}$ & $(-5e^{16}+e^{25}, -2e^{26}+e^{45},e^{24}-e^{36},e^{46},-3e^{56},0)$ & $-$ \\\hline
$A_{6,82}^{2,5,9}$ & $(2e^{16}+e^{24}+e^{35}, 6e^{26},10e^{36},-4e^{46},-8e^{56},0)$ & $-$ \\\hline
$A_{6,94}^{-3}$ & $(-e^{16}+e^{25}+e^{34},-2e^{26}+e^{35},-3e^{36},2e^{46},e^{56},0)$ & $-$ \\\hline
$A_{6,94}^{-\frac{5}{3}}$ & $(\frac{1}{3}e^{16}+e^{25}+e^{34},-\frac{2}{3}e^{26}+e^{35},-\frac{5}{3}e^{36},2e^{46},e^{56},0)$ & $-$ \\\hline
$A_{6,94}^{-1}$ & $(e^{16}+e^{25}+e^{34},e^{35},-e^{36},2e^{46},e^{56},0)$ & $-$ \\\hline
\end{tabular}
}

\bigskip

\centerline{{\bf Table 2.} Indecomposable non-unimodular solvable Lie algebras with 5-dimensional nilradical}

\centerline{admitting both symplectic and half-flat structure \cite{FS2}.}

\medskip

\vfill\eject

\small{
\centerline{
\begin{tabular}{|c|c|c|}
\hline
$\mathfrak{g}$ & \bf{str. equations} & \bf{symplectic half-flat str.}\\\hline
$N_{6,1}^{\alpha,\beta,-\alpha,-\beta}\, \alpha\beta\neq0$&$(\alpha e^{15}+\beta e^{16},-\alpha e^{25}-\beta e^{26},e^{36},e^{45},0,0)$&$-$\\\hline
$N_{6,1}^{\alpha,\beta,0,-1}\, \alpha\beta\neq 0$&$(\alpha e^{15}+\beta e^{16}, -e^{26},e^{36},e^{45},0,0)$&$-$\\\hline
$N_{6,1}^{\alpha,\beta,-1,0}\, \alpha\beta\neq 0$&$(\alpha e^{15}+\beta e^{16}, -e^{25},e^{36},e^{45},0,0)$&$-$\\\hline
$N_{6,2}^{-1,\beta,-\beta}$&$(-e^{15}+\beta e^{16},e^{25}-\beta e^{26},e^{36},e^{35}+e^{46},0,0)$&$-$\\\hline
$N_{6,2}^{0,-1,\gamma}$&$(-e^{16},e^{25}+\gamma e^{26},e^{36},e^{35}+e^{46},0,0)$&$-$\\\hline
$N_{6,7}^{0,\beta,0}\ \beta\neq 0$ &$(-e^{26},e^{16},e^{35},e^{35}+\beta e^{36}+e^{45},0,0)$&$-$\\\hline
$N_{6,13}^{\alpha,\beta,-\alpha,-\beta}$ &$(\alpha e^{15}+\beta e^{16},-\alpha e^{25}-\beta e^{26},e^{36}-e^{45},$&$-$\\
$\alpha^2+\beta^2\neq 0$&$e^{35}+e^{46},0,0)$&\\
$(\alpha,\beta)\neq(0,\pm2)$&&\\\hline
$N_{6,13}^{0,-2,0,2}$ &$(-2 e^{16}, 2 e^{26},e^{36}-e^{45},e^{35}+e^{46},0,0)$&$F=e^{12}+e^{35}+e^{46}$\\
&&$\rho=e^{134}-e^{156}-e^{236}+e^{245}$\\\hline
$N_{6,14}^{\alpha,\beta,0}\ \alpha\beta\neq 0 $&$(\alpha e^{15}+\beta e^{16},e^{26},-e^{45},e^{35},0,0)$&$-$\\\hline
$N_{6,15}^{0,\beta,\gamma,0}\ \beta\neq 0$&$(e^{15}+\gamma e^{16}-e^{26},e^{16}+e^{25}+\gamma e^{26},$&$-$\\
&$-\beta e^{45},\beta e^{35},0,0)$&\\\hline
$N_{6,16}^{0,0}$&$(e^{16},e^{15}+e^{26},-e^{45},e^{35},0,0)$&$-$\\\hline
$N_{6,17}^{0}$&$(0,e^{15},e^{36}-e^{45},e^{35}+e^{46},0,0)$&$-$\\\hline
$N_{6,18}^{0,\beta,0}\ \beta\neq 0$&$(e^{16}-e^{25},e^{15}+e^{26},-\beta e^{45},\beta e^{35},0,0)$&$-$\\\hline
$N_{6,20}^{0,-1}$&$(-e^{56},-e^{26},e^{36},e^{45},0,0)$&$-$\\\hline
$N_{6,22}^{\alpha,0}\ \alpha \neq0$&$(e^{15}+\alpha e^{16},e^{26},0,e^{35},0,0)$&$-$\\\hline
$N_{6,23}^{\alpha,0}$&$(e^{15}-e^{26},e^{16}+e^{25},0,e^{35}+\alpha e^{36},0,0)$&$-$\\\hline
$N_{6,26}^{0}$&$(-e^{56},e^{26},-e^{45},e^{35},0,0)$&$-$\\\hline
$N_{6,28}$&$(-e^{24}+e^{15},-e^{34}+e^{26}, -e^{35}+2e^{36},$&$-$\\
&$e^{45}-e^{46},0,0)$&\\\hline
$N_{6,29}^{\alpha,\beta}$&$(-e^{23}+e^{15}+e^{16},e^{25},e^{36},\alpha e^{45}+\beta e^{46},0,0)$&$-$\\
$\alpha^2+\beta^2\neq0$&&\\\hline
$N_{6,30}^{\alpha}$&$(-e^{23}+2e^{15},e^{25},e^{26}+e^{35},\alpha e^{45}+e^{46},0,0)$&$-$\\\hline
$N_{6,32}^{\alpha}$&$(-e^{23}+e^{45}+e^{16},e^{25}+\alpha e^{26},$&$-$\\
&$(1-\alpha)e^{36}-e^{35},e^{46},0,0)$&\\\hline
$N_{6,33}$&$(-e^{23}+e^{15}+e^{16},e^{25}, e^{36},e^{36}+e^{46},0,0)$&$-$\\\hline
$N_{6,34}^{\alpha}$&$(-e^{23}+e^{15}+(1+\alpha)e^{16},e^{25}+\alpha e^{26},$&$-$\\
&$e^{36},e^{35}+e^{46},0,0)$&\\\hline
$N_{6,35}^{\alpha,\beta}\ \alpha \neq 0$&$(-e^{23}+2e^{16},-e^{35}+e^{26},e^{36}+e^{25},$&$-$\\
&$\alpha e^{45}+\beta e^{46},0,0)$&\\\hline
$N_{6,37}^{\alpha}$&$(-e^{23}+e^{45}+2e^{16},e^{26}-e^{35}-\alpha e^{36},$&$-$\\
&$e^{25}+\alpha e^{26}+e^{36},2e^{46},0,0)$&\\\hline
$N_{6,38}$&$(-e^{23}+e^{15}+e^{16},e^{25}, e^{36},-e^{56},0,0)$&$-$\\\hline
$N_{6,39}$&$(-e^{23}+2e^{16},-e^{35}+e^{26}, e^{25}+e^{36},-e^{56},0,0)$&$-$\\\hline
\end{tabular}
}

\bigskip

\centerline{{\bf Table 3.} Indecomposable non-unimodular symplectic solvable Lie algebras}

\centerline{with 4-dimensional nilradical.}


\vfill\eject

\centerline{
\begin{tabular}{|c|c|c|}
\hline
$\frg$& \bf{str. equations}& \bf{symplectic half-flat str.}\\
\hline
$\fre(2)\oplus\fre(2)$&$(0,-e^{13},e^{12},0,-e^{46},e^{45})$&$-$\\\hline
$\fre(1,1)\oplus\fre(1,1)$&$(0,-e^{13},-e^{12},0,-e^{46},-e^{45})$&$F=e^{14}+e^{23}+2e^{56}$\\
&&$\rho=e^{125}-e^{126}-e^{135}-e^{136}$\\
&&$+e^{245}+e^{246}+e^{345}-e^{346}$\\
\hline
$\fre(2)\oplus\R^3$&$(0,-e^{13},e^{12},0,0,0)$&$-$\\\hline
$\fre(1,1)\oplus\R^3$&$(0,-e^{13},-e^{12},0,0,0)$&$-$\\\hline
$\fre(2)\oplus\fre(1,1)$&$(0,-e^{13},e^{12},0,-e^{46},-e^{45})$&$-$\\\hline
$\fre(2)\oplus\frh$&$(0,-e^{13},e^{12},0,0,e^{45})$&$-$\\\hline
$\fre(1,1)\oplus\frh$&$(0,-e^{13},-e^{12},0,0,e^{45})$&$-$\\\hline
$\fre(2)\oplus\frr_2\oplus\R$&$(0,-e^{13},e^{12},0,-e^{45},0)$&$-$\\\hline
$\fre(1,1)\oplus\frr_2\oplus\R$&$(0,-e^{13},-e^{12},0,-e^{45},0)$&$-$\\\hline
\end{tabular}
}

\medskip

\centerline{{\bf Table 4.} $3\oplus3$ decomposable (non-nilpotent) Lie algebras}

\centerline{admitting both symplectic and half-flat structures.}

\medskip

\bigskip

\centerline{
\begin{tabular}{|c|c|c|}
\hline
$\frg$& \bf{str. equations}& \bf{symplectic half-flat str.}\\
\hline
$A_{4,1}\oplus\frr_2$&$(e^{24},e^{34},0,0,0,e^{56})$&$-$\\\hline
$A_{4,9}^{-\frac{1}{2}}\oplus\frr_2$&$(\frac{1}{2}e^{14}+e^{23},e^{24},-\frac{1}{2}e^{34},0,0,e^{56})$&$-$\\\hline
$A_{4,12}\oplus\frr_2$&$(e^{13}+e^{24},-e^{14}+e^{23},0,0,0,e^{56})$&$-$\\\hline
$\frr_2\oplus\frr_2\oplus\frr_2$&$(0,-e^{12},0,-e^{34},0,-e^{56})$&$-$\\\hline
\end{tabular}
}

\medskip

\centerline{{\bf Table 5.} $4\oplus2$ decomposable (non-nilpotent) Lie algebras}

\centerline{admitting both symplectic and half-flat structures.}

\medskip

\bigskip

\centerline{
\begin{tabular}{|c|c|c|}
\hline
$\frg$& \bf{str. equations}& \bf{symplectic half-flat str.}\\
\hline
$A_{5,7}^{-1,\beta,-\beta}\oplus\R$&$(e^{15},-e^{25},\beta e^{35},-\beta e^{45},0,0)$&$-$\\
$0<\beta<1$&&\\\hline
$A_{5,7}^{-1,-1,1}\oplus\R$&$(e^{15},-e^{25},-e^{35},e^{45},0,0)$&$F=-e^{13}+e^{24}+e^{56}$\\
&&$\rho=-e^{126}-e^{145}-e^{235}-e^{346}$\\
\hline
$A_{5,8}^{-1}\oplus\R$&$(e^{25},0,e^{35},-e^{45},0,0)$&$-$\\\hline
$A_{5,13}^{-1,0,\gamma}\oplus\R$&$(e^{15},-e^{25},\gamma e^{45},-\gamma e^{35},0,0)$&$-$\\
$\gamma>0$&&\\\hline
$A_{5,14}^0\oplus\R$&$(e^{25},0,e^{45},-e^{35},0,0)$&$-$\\\hline
$A_{5,15}^{-1}\oplus\R$&$(e^{15}+e^{25},e^{25},-e^{35}+e^{45},-e^{45},0,0)$&$-$\\\hline
$A_{5,17}^{0,0,\gamma}\oplus\R$&$(e^{25},-e^{15},\gamma e^{45},-\gamma e^{35},0,0)$&$-$\\
$0<\gamma<1$&&\\\hline
$A_{5,17}^{\alpha,-\alpha,1}\oplus\R$&$(\alpha e^{15}+e^{25},-e^{15}+\alpha e^{25},$&$F=e^{13}+e^{24}+e^{56}$\\
$\alpha\geq 0$&$-\alpha e^{35}+e^{45},-e^{35}-\alpha e^{45},0,0)$&$\rho=e^{125}-e^{146}+e^{236}-e^{345}$\\
\hline
$A_{5,18}^0\oplus\R$&$(e^{25}+e^{35},-e^{15}+e^{45},e^{45},-e^{35},0,0)$&$-$\\\hline
$A_{5,19}^{-1,2}\oplus\R$&$(-e^{15}+e^{23},e^{25},-2 e^{35},2e^{45},0,0)$&$-$\\\hline
$A_{5,36}\oplus\R$&$(e^{14}+e^{23},e^{24}-e^{25},e^{35},0,0,0)$&$-$\\\hline
$A_{5,37}\oplus\R$&$(2e^{14}+e^{23},e^{24}+e^{35},-e^{25}+e^{34},0,0,0)$&$-$\\\hline
\end{tabular}
}
\medskip

\centerline{{\bf Table 6.} $5\oplus1$ decomposable (non-nilpotent) Lie algebras}

\centerline{admitting both symplectic and half-flat structures.}

\medskip

\vskip.5cm

\noindent {\bf Acknowledgments.}
We would like to thank Sergio Console and Anna Fino for many useful comments and suggestions.
We also thank the anonymous referee
for useful advice which improved the presentation of the paper.
This work has been partially supported through  Projects MICINN (Spain) MTM2008-06540-C01/02
and MEC (Spain) MTM2011-28326-C01/02.

\smallskip

{\small

\end{document}